\documentclass[final,onefignum,onetabnum]{siamart251216}
\usepackage{bbm}
\usepackage[T1]{fontenc}
\usepackage{mathtools}
\usepackage{amssymb}
\setlength{\marginparwidth}{2cm}
\usepackage{todonotes}
\usepackage{xcolor}
\usepackage{mathtools}
\usepackage{mathrsfs}
\usepackage{hyperref}
\usepackage{dsfont}
\usepackage{booktabs}

\usepackage{lipsum}
\usepackage{amsfonts}
\usepackage{graphicx}
\usepackage{epstopdf}
\usepackage{algorithmic}
\ifpdf
  \DeclareGraphicsExtensions{.eps,.pdf,.png,.jpg}
\else
  \DeclareGraphicsExtensions{.eps}
\fi

\usepackage[shortlabels]{enumitem}
\setlist[enumerate]{leftmargin=.5in}
\setlist[itemize]{leftmargin=.5in}


\newsiamremark{remark}{Remark}
\newsiamremark{hypothesis}{Hypothesis}
\crefname{hypothesis}{Hypothesis}{Hypotheses}
\newsiamthm{claim}{Claim}

\headers{Consistency of Learned Sparse Grid Quadrature}{H. Gottschalk, E. Partow, T. J. Riedlinger}

\title{Consistency of Learned Sparse Grid Quadrature Rules Using NeuralODEs\thanks{Submitted to the editors DATE. Authors listed in
alphabetical order.
}
}

\author{
Hanno Gottschalk\thanks{Technische Universität Berlin, Germany
  (\email{gottschalk@math.tu-berlin.de}).}
\and
Emil Partow\thanks{\textit{Corresponding author.}
  Ludwig-Maximilians-Universität München, Germany,
  and Munich Center for Machine Learning, Germany
  (\email{emil.partow@math.lmu.de}).}
\and
Tobias J.\ Riedlinger\thanks{Technische Universität Berlin, Germany
  (\email{riedlinger@math.tu-berlin.de}).}
}


\usepackage{amsopn}

 
\usepackage{mathrsfs}
\usepackage{graphicx}
\usepackage[caption=false]{subfig}
\usepackage{tikz}
\usepackage{pgfplots}
\pgfplotsset{compat=1.18}
\usepackage{afterpage}
\usepackage{comment}

\newcommand{\vk}{\boldsymbol k}

\newcommand{\valpha}{{\boldsymbol \alpha}}
\newcommand{\vbeta}{{\boldsymbol \beta}}

\newcommand{\evdist}[2]{ {\boldsymbol{\mathsf  E}}_{#1} \left[  #2 \right]}

\renewcommand{\d}[1]{\ensuremath{\operatorname{d}\!{#1}}}



\newtheorem{assumption}[theorem]{Assumption}

\numberwithin{equation}{section}

\newcommand{\R}{\mathbb{R}} 
\newcommand{\N}{\mathbb{N}} 

\DeclareMathOperator{\TV}{TV}
\DeclareMathOperator{\qoi}{QoI} 


\providecommand{\erf}{\operatorname{erf}}
\ifpdf
\hypersetup{
  pdftitle={Consistency of Learned Sparse Grid Quadrature Rules Using NeuralODEs},
  pdfauthor={H. Gottschalk, E. Partow, T. J. Riedlinger}
}
\fi

\begin{document}
\maketitle
\begin{abstract}
We prove consistency of a recently proposed scheme that evaluates expected
values by composing a learned transport map with Clenshaw--Curtis sparse-grid
quadrature on a tractable product source.
Our analysis hinges on the structural fact that composition of a $C^k_{\mathrm{mix}}$-regular function --- which carries the fast quadrature rate
$m^{-k}(\log m)^{(d-1)(k+1)}$ --- with a $C^1$-diffeomorphism can only be guaranteed to be $C^k_{\mathrm{mix}}$ itself, if the diffeomorphism is diagonal up to a permutation of coordinates.
The fast rate is therefore available exclusively for product targets, and the analysis splits into two regimes. In the \emph{general regime} of arbitrary targets, we learn the transport as the time-one flow of a $\mathrm{ReLU}^{k+1}$-neural ODE trained by maximum likelihood. The resulting flow lies in the isotropic space $C^k$ and yields the rate $m^{-k/d}(\log m)^{(d-1)(k/d+1)}$, with raising the density smoothness $k$ and the matched activation order $k+1$ mitigating the curse of dimensionality at the cost of harder optimization. In the \emph{diagonal regime} of product targets, the Knothe--Rosenblatt map is itself diagonal and we estimate it pointwise via empirical quantile transport, a lightweight alternative that recovers the full mixed-regularity rate. In both regimes, the resulting LtI estimator is PAC (probably approximately correct) consistent. With high probability the numerical integral approximates the true value to arbitrary accuracy as both the sample size $n$ and the quadrature budget $m$ tend to infinity.
\end{abstract}
\begin{keywords}
Numerical analysis~$\cdot$~Statistical learning theory~$\cdot$~Sparse Grids~$\cdot$~NeuralODEs
\end{keywords}
\begin{MSCcodes}
Primary 65C20, 68T05 $\cdot$ Secondary 65D40, 65D32
\end{MSCcodes}
\section{Introduction}
The numerical computation of integrals against probability measures $\mu$ is a
fundamental problem in numerical analysis \cite{Caflisch_1998,bungartz2004sparse,sloan_dick_kuo}, with applications in engineering,
the geosciences, and quantitative finance \cite{Glasserman2010-yd,sullivan2015introduction,Stuart_2010}. Often, the integrand is the output of a computationally expensive simulation that depends on multiple uncertain input parameters, and computing its expected value requires accurate quadrature in high dimensions. For several well-known distributions, such as the normal or the uniform, highly efficient sparse grid quadrature rules exist \cite{bungartz2004sparse, osti_1832293, smolyak1963, sullivan2015introduction}, which allow for efficient and accurate quadrature. However, the distributions encountered in practical applications do not necessarily fall into these classes and are typically far more complex.
In many relevant situations, there is not even explicit knowledge of the density of the distribution. Rather, the distribution is only represented as a set of samples of the parameters, e.g.\ obtained from a parametric bootstrap simulation \cite{davison1997bootstrap} or, in Bayesian methods, retrieved via Markov chain Monte Carlo \cite{gamerman2006markov}. These samples are often cheap to produce compared to the expensive simulation to which they serve as input, which makes naive Monte Carlo approaches inefficient.
Recently, the application of generative learning models has been proposed to overcome this difficulty by transforming the parameter distribution into a tractable distribution for which sparse grid quadrature rules are known \cite{ernst2025learning}.
Normalizing Flows (NF) \cite{papamakarios2021normalizing,rezende2015variational} learn a transport map \cite{Santambrogio2015-lp} that pushes a complicated multivariate distribution $\mu \in \mathcal{M}_1^+(\Omega)$ --- the space of probability measures on $\R^d$ supported on $\Omega$ --- to a simple noise distribution $\nu$, typically the multivariate standard normal or the uniform on the cube.
Concretely, NFs use neural network maps $\Phi^\theta$ that are easy to invert, so that both the generative direction $\Phi^\theta_*\nu \approx \mu$ and the normalizing direction $(\Phi^\theta)^{-1}_*\mu \approx \nu$ can be evaluated efficiently; here $\theta$ collects the weights and biases of the network and $\Phi^\theta_*\nu$ denotes the image measure of $\nu$ under $\Phi^\theta$, $\Phi^\theta_*\nu(B) = \nu((\Phi^\theta)^{-1}(B))$ for $B \subset \Omega$ Borel.
Existing models comprise affine coupling flows \cite{dinh2017density}, LU-net \cite{chan2023lu,rochau2024new}, and flows induced by neural ordinary differential equations (neuralODE) \cite{chen2019}, the latter being trainable either by maximum likelihood or via flow matching \cite{lipman2023flow}.
Across these architectures, impressive empirical results on modeling complex distributions have been obtained.
Alongside this empirical progress, a body of mathematical work studying the consistency of generative learning has emerged \cite{ehrhardt2025,marzouk2023,marzouk2025}.
These works provide convergence guarantees in the large-sample limit, typically by combining (optimal) transport theory for the existence of a sufficiently regular transport map \cite{ehrhardt2025,marzouk2023}, recent advances in the universal approximation of deep neural networks \cite{belomestny2022}, and methods from non-parametric statistics \cite{ehrhardt2025,marzouk2025}.
The present paper investigates the consistency of a recently proposed method called \emph{learning to integrate} (LtI) \cite{ernst2025learning}.
LtI uses a learned generative map $\Phi^\theta$ together with a sparse grid quadrature rule designed for the source $\nu$ to numerically integrate $\qoi$ over $\mu$, by integrating $\qoi \circ \Phi^\theta$ over $\nu$.
Its main benefit emerges when point evaluations of $\qoi$ are computationally costly while samples from $\mu$ are abundant. From such samples, a transport map $\Phi^\theta$ with $\Phi^\theta_*\nu \approx \mu$ can be learned, after which $\qoi$ is evaluated only on the sparse grid sites associated with $\nu$.
Our analysis of LtI starts from the convergence theory of Clenshaw--Curtis sparse grid quadrature on the $d$-dimensional unit cube.
With $m$ quadrature points, this rule attains the rate $m^{-k}(\log m)^{(d-1)(k+1)}$ on the space $C^k_{\mathrm{mix}}$ of functions with bounded mixed derivatives, but only the slower rate $m^{-k/d}(\log m)^{(d-1)(k/d+1)}$ on the larger space $C^k$ of functions with isotropic smoothness \cite{Clenshaw1960,Novak1996,Novak1997,Novak1999}.
In LtI, the integrand to which the rule is applied is the composition $\qoi \circ \Phi^\theta$, and the regularity of \emph{both} the quantity of interest and the learned transport enters the rate.
As a structural observation we prove that mixed regularity is preserved under composition with a $C^1$-diffeomorphism only when the latter is diagonal up to a permutation of coordinates; the fast mixed-regularity rate is therefore available for general LtI only when $\mu$ is a product measure.
The analysis splits accordingly into two regimes.
In the \emph{general regime}, where $\mu$ is arbitrary, we propose to learn the transport as the time-one flow of a neural ordinary differential equation with $\mathrm{ReLU}^s$ activations of arbitrary order $s \geq 2$, trained by maximum likelihood.
The activation order $s$ governs the regularity of the resulting flow, $\mathrm{ReLU}^s$ activations yielding $\Phi^\theta \in C^{s-1}$ and hence the isotropic quadrature rate exponent $(s-1)/d$, while the smoothness $k$ of the target density governs the statistical learning rate via nonparametric M-estimation.
The statistical learning theory underlying this construction is provided in \cite{ehrhardt2025,marzouk2023,marzouk2025} for the case $s = 2$ of $\mathrm{ReQU}$ activations. We extend it to general $\mathrm{ReLU}^s$, $s \geq 2$.
In the \emph{diagonal regime}, where $\mu$ is a product measure, we propose empirical quantile transport as a lightweight alternative. The marginal cumulative distribution functions of $\mu$ are estimated from data and their generalized inverses are taken as transport components.
In both regimes we prove that the integration scheme is probably approximately correct (PAC) consistent in the sense of \cite{shalev2014understanding}.
The setting analyzed in this paper differs in a few aspects from the numerical approach of \cite{ernst2025learning}, motivated by the requirements of the consistency analysis and the structural split into two regimes.
We work on the unit cube $\Omega = [0,1]^d$ rather than on $\R^d$, since (a) universal approximation of neural networks is more readily available on compact domains, and (b) the convergence theory of Clenshaw--Curtis sparse grids is well understood on the cube \cite{Clenshaw1960,Novak1996,Novak1997,Novak1999}.
For the source and target measures we assume continuous densities bounded away from zero, which is a standard setting in statistical learning theory for generative models \cite{marzouk2023,marzouk2025,ehrhardt2025}.
For the general regime we choose neuralODE among the available NF architectures because the statistical learning theory for this model is the most developed \cite{marzouk2023,marzouk2025}; the analysis of other architectures used in \cite{ernst2025learning} is left for future work.
For the diagonal regime, we propose empirical quantile transport rather than a neural transport. When $\mu$ is a product measure, the Knothe--Rosenblatt map reduces to a coordinatewise composition of inverse marginal cumulative distribution functions, which is directly accessible from the data and admits a clean concentration analysis.

The remainder of the paper is organized as follows.
Section~\ref{sec:LtI} introduces the LtI framework and the relevant notions from statistical learning theory.
Section~\ref{sec:quadrature_rates} introduces Clenshaw--Curtis rules and states the quadrature error bounds on $C^k$ and $C^k_{\mathrm{mix}}$ that govern the rates available in each regime.
Section~\ref{sec:transport} establishes the structural result that mixed regularity is preserved under composition only by transports that are diagonal up to a permutation of coordinates, motivating the split into the two regimes.
Section~\ref{sec:general_regime} treats the general regime: the transport is the time-one flow of a $\mathrm{ReLU}^s$-neuralODE trained by maximum likelihood, leading to PAC consistency at the isotropic rate $m^{-(s-1)/d}$.
Section~\ref{sec:diagonal_regime} treats the diagonal regime: the marginals of $\mu$ are matched via empirical quantile transport, leading to PAC consistency at the full mixed-regularity rate.
Section~\ref{sec:numerics} illustrates the theoretical results numerically on a range of Genz test integrands and product and non-product target measures.
Section~\ref{sec:Discussion_Outlook} concludes with a discussion of the results and an outlook on open questions.
The extension of the existing $\mathrm{ReQU}$ statistical learning theory to general $\mathrm{ReLU}^s$ activations, on which the analysis of the general regime relies, is presented in Appendix~\ref{app:relus}.
\section{Learning to Integrate via Generative Models}
\label{sec:LtI}
\subsection{Generative Learning}
\label{subsec:generative_learning}

The goal of generative learning is to approximate an unknown \emph{target
distribution} $\mu \in \mathcal{M}_1^+(\Omega)$ from i.i.d.\ samples
$X_1, \dots, X_n \sim \mu$. Throughout this paper  $\mathcal{M}_1^+(\Omega)$ denotes the space of probability measures
supported on $\Omega$. Rather than estimating the high-dimensional density of $\mu$ directly,
generative models represent $\mu$ implicitly through a \emph{transport map}
$\Phi \colon \Omega \to \Omega$ that pushes a known, easily sampled
\emph{source distribution} $\nu \in \mathcal{M}_1^+(\Omega)$ onto $\mu$, in
the sense that $\Phi_* \nu \approx \mu$. Sampling from the model then reduces
to pushing $\nu$-samples through $\Phi$. In practice, $\Phi$ is parametrized by a function class
$\{\Phi^\theta\}_{\theta \in \Theta}$, inducing a family of model
distributions $\mu_\theta := \Phi^\theta_* \nu$. The parameter $\theta$ is
chosen so that $\mu_\theta$ is close to $\mu$ in some divergence
$\mathcal{D} \colon \mathcal{M}_1^+(\Omega) \times \mathcal{M}_1^+(\Omega) \to
[0, \infty]$.
\subsection{Quadrature Rules}
\label{subsec:quadrature}
A \emph{quadrature rule} for a probability measure
$\rho \in \mathcal{M}_1^+(\Omega)$ is a finite collection of nodes and weights
$(\xi_j, w_j)_{j=1}^m \subset \Omega \times \R$ approximating the expectation of
a function $g \colon \Omega \to \R$ under $\rho$ by a weighted sum,
\begin{align}
\evdist{\rho}{g} = \int_\Omega g(x) \, \d\rho(x) \;\approx\; \sum_{j=1}^m w_j \, g(\xi_j).
\label{eq:quad_generic}
\end{align}
The number $m$ of nodes is the \emph{budget} of the rule. Both nodes and
weights depend explicitly on $\rho$. Their construction presupposes $\rho$ to
be given in closed form, and efficient high-order rules are typically
available only when $\rho$ is a product of standard one-dimensional
measures, such as the uniform or the standard normal
\cite{Novak1997,Novak1996,Novak1999}.
\subsection{Learning to Integrate}
\label{subsec:learning_to_integrate}
Let $\nu, \mu \in \mathcal{M}_1^+(\Omega)$ be probability distributions on $\Omega$, and suppose a transport map $\Phi \colon \Omega \to \Omega$ pushes $\nu$ to $\mu$, that is, $\Phi_*\nu = \mu$.
Then, by the change-of-variables formula, the expected value of a quantity of interest $\qoi \colon \Omega \to \R$ under $\mu$ can be rewritten as an expected value under $\nu$,
\begin{align}
\evdist{\mu}{\qoi} = \int_{\Omega} \qoi(x) \, \d\mu(x) = \int_{\Omega} \qoi\big(\Phi(z)\big) \, \d\nu(z).
\label{eq:change_of_variables}
\end{align}
The identity \eqref{eq:change_of_variables} motivates the LtI approach to numerical integration against $\mu$. A quadrature rule $(\xi_j, w_j)_{j=1}^m$ for $\nu$, in the sense of Section~\ref{subsec:quadrature}, induces via $\Phi$ an integration rule for $\mu$,
\begin{align}
\evdist{\mu}{\qoi} \approx \sum_{j=1}^m w_j \, \qoi\big(\Phi(\xi_j)\big),
\label{eq:lti_rule}
\end{align}
which is implementable whenever $\nu$ is tractable for quadrature and $\Phi$ is explicitly available; cf. fig.~\ref{fig:lti_schematic}.
\begin{figure}[ht]
    \centering
    \includegraphics[width=0.7\linewidth]{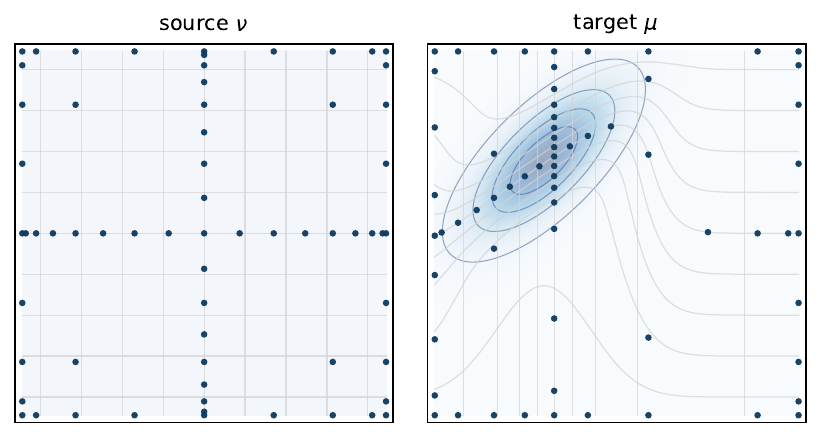}
    \caption{LtI in $d=2$: quadrature nodes $\xi_j$ for uniform source $\nu$ (left) transported to quadrature nodes $\Phi(\xi_j)$ for a  target $\mu$ (right).}
    \label{fig:lti_schematic}
\end{figure}
In the setting considered in this paper, $\mu$ is unknown and accessible only through i.i.d.\ samples $X_1, \dots, X_n \sim \mu$.
The transport map $\Phi$ is therefore unavailable in closed form and is replaced by an estimator $\Phi^{\hat{\theta}_n}$ learned from the data, with $\Phi^{\hat{\theta}_n}_*\nu \approx \mu$, in the sense of Section~\ref{subsec:generative_learning}.
Substituting $\Phi^{\hat{\theta}_n}$ for $\Phi$ in \eqref{eq:lti_rule} yields the \emph{learning to integrate} (LtI) approximation
\begin{align}
\evdist{\mu}{\qoi} \approx \sum_{j=1}^m w_j \, \qoi\big(\Phi^{\hat{\theta}_n}(\xi_j)\big).
\label{eq:lti_estimator}
\end{align}
\subsection{PAC Consistency}
\label{subsec:pac}
The accuracy of the LtI approximation \eqref{eq:lti_estimator} depends on the random sample $\chi_n := (X_1, \dots, X_n)$, $X_i \sim \mu$, and the finite quadrature budget $m$.
To formulate consistency in this setting, we first recall the standard notion of statistical learning of a distribution from samples, cf.~\cite{shalev2014understanding}, and then specialize it to the integration error of the LtI scheme.
Let $\mathcal{T} \subset \mathcal{M}_1^+(\Omega)$ denote a \emph{target class} of admissible distributions.
A family of estimators $\{\hat\mu_n\}_{n \in \N}$, where each $\hat\mu_n(\chi_n) \in \mathcal{M}_1^+(\Omega)$ depends measurably only on the observed samples, is said to \emph{learn} the target class $\mathcal{T}$ with respect to a divergence $\mathcal{D} \colon \mathcal{M}_1^+(\Omega) \times \mathcal{M}_1^+(\Omega) \to [0, \infty]$ if
\[
\mathcal{D}(\mu \,\|\, \hat\mu_n(\chi_n)) \;\xrightarrow[n \to \infty]{\mathbb{P}}\; 0 \qquad \text{for all } \mu \in \mathcal{T}.
\]
A stronger notion is \emph{probably approximately correct} (PAC) learnability. The class $\mathcal{T}$ is PAC-learnable with respect to $\mathcal{D}$ if, for all $\varepsilon, \delta > 0$, there exists a sample size threshold $n(\varepsilon, \delta) \in \N$ such that
\begin{align}
\mathbb{P}\!\left(\mathcal{D}(\mu \,\|\, \hat\mu_n(\chi_n)) > \varepsilon\right) \leq \delta \qquad \text{for all } \mu \in \mathcal{T} \text{ and all } n \geq n(\varepsilon, \delta).
\label{eq:pac_distribution}
\end{align}
In the LtI context, the relevant quantity is not the distribution
$\hat\mu_n := (\hat\Phi_n)_* \nu$ itself but the \emph{integration error}
\begin{align}
\varepsilon^{\mathrm{tot}}_{n,m}(\qoi)
:= \left| \evdist{\mu}{\qoi} - \sum_{j=1}^m w_j \, \qoi\big(\hat\Phi_n(\xi_j)\big) \right|,
\label{eq:total_error}
\end{align}
which depends on both the sample size $n$ and the quadrature budget $m$.
We say that the LtI scheme is \emph{PAC consistent} on the target class
$\mathcal{T}$ if, for all $\varepsilon, \delta > 0$, there exist a quadrature
budget threshold $m(\varepsilon, \delta) \in \N$ and a sample size threshold
$n(\varepsilon, \delta, m) \in \N$, the latter potentially depending on
the quadrature budget, such that
\begin{align}
\mathbb{P}\!\left(\varepsilon^{\mathrm{tot}}_{n,m}(\qoi) > \varepsilon\right)
\leq \delta \qquad \text{for all } \mu \in \mathcal{T},\
m \geq m(\varepsilon, \delta),\
n \geq n(\varepsilon, \delta, m).
\label{eq:pac_lti}
\end{align}
Allowing $n$ to depend on $m$ reflects the fact, that refining the quadrature without increasing the sample size may
amplify the contribution of $\hat\Phi_n$'s estimation error to the total
integration error. Whether such an amplification actually
occurs depends on the specific transport estimator and quadrature rule.

\begin{remark}[Naive Monte Carlo as baseline]
\label{rmk:mc_baseline}
The natural competitor for the LtI scheme is naive Monte Carlo, which estimates $\evdist{\mu}{\qoi}$ by averaging $\qoi$ over i.i.d.\ samples drawn directly from $\mu$,
\[
\hat I_n^{\mathrm{MC}} := \frac{1}{n} \sum_{j=1}^n \qoi(Y_j), \qquad Y_1, \dots, Y_n \stackrel{\mathrm{iid}}{\sim} \mu.
\]
For bounded $\qoi$ with $\|\qoi\|_\infty \leq M$, Hoeffding's inequality \cite{Hoeffding01031963} yields
\[
\mathbb{P}\!\left(\big|\hat I_n^{\mathrm{MC}} - \evdist{\mu}{\qoi}\big| > \varepsilon\right) \leq 2 \exp\!\left(-\frac{n \varepsilon^2}{2 M^2}\right),
\]
so that achieving accuracy $\varepsilon$ with confidence $1 - \delta$ requires
\begin{equation}
n_{\mathrm{MC}}(\varepsilon, \delta) \;=\; \frac{2 M^2}{\varepsilon^2} \log\!\left(\frac{2}{\delta}\right) \;=\; \mathcal{O}\!\left(\varepsilon^{-2} \log(1/\delta)\right)
\label{eq:mc_rate}
\end{equation}
function evaluations of $\qoi$. In contrast to Monte Carlo, the LtI framework decouples the costs into a
quadrature budget $m$ and a learning budget $n$, with $m$ chosen freely. This split can be advantageous
whenever sampling from $\mu$ is cheap compared with evaluating $\qoi$.
\end{remark}
\section{Sparse Grid Quadrature on the Cube}
\label{sec:quadrature_rates}
This section gathers the \emph{sparse grid} quadrature rules used throughout the paper, together with their convergence rates. In the following, let $\Omega := [0,1]^d$.
\subsection{Sparse Grid Construction}
\label{subsec:sparse_grid}
The sparse grid construction assembles a quadrature rule on the cube $[0,1]^d$ from a family of univariate rules on $[0,1]$, in such a way that the number of multivariate function evaluations grows only polynomially in the dimension $d$.
For each direction $i = 1, \dots, d$ and each level $l \in \N$, let $\{(\xi_{j,l}^{(i)}, w_{j,l}^{(i)})\}_{j=1}^{m_l}$ denote a univariate $m_l$-point quadrature rule on $[0,1]$ for the marginal $\nu_i$ of a product source $\nu = \bigotimes_{i=1}^d \nu_i$ with density $f_\nu(x) = \prod_{i=1}^d f_{\nu_i}(x_i)$.
The function $l \mapsto m_l \in \N$ is called the \emph{growth rule}.
For a multi-index $\vk = (k_1, \dots, k_d) \in \N^d$, the corresponding \emph{tensorized quadrature operator} is
\[
I_{\vk}^d(f) := \sum_{j_1=1}^{m_{k_1}} \cdots \sum_{j_d=1}^{m_{k_d}}
\prod_{i=1}^d w_{j_i, k_i}^{(i)} \cdot f\!\left(\xi_{j_1, k_1}^{(1)}, \dots, \xi_{j_d, k_d}^{(d)}\right),
\]
defined for functions $f \colon [0,1]^d \to \R$.
Evaluating $I_{\vk}^d$ requires $\prod_{i=1}^d m_{k_i}$ function evaluations, which grows exponentially with the dimension $d$. The \emph{curse of dimensionality} thus severely limits the practical applicability of full tensorization in high-dimensional settings.
To mitigate this exponential growth, Smolyak~\cite{smolyak1963} introduced a sparse tensor product construction that combines lower-dimensional tensorized rules.
The \emph{Smolyak sparse quadrature operator} is, for $q \geq d$,
\begin{align}
\mathcal{S}_q^d(f) := \sum_{q - d + 1 \leq |{\vk}| \leq q} (-1)^{q - |{\vk}|} \binom{d-1}{q - |{\vk}|}\, I_{\vk}^d(f),
\label{eq:smolyak}
\end{align}
where $|{\vk}| = k_1 + \cdots + k_d$ and $\ell := q - d$ is called the \emph{sparsity level} \cite{wasilikowski1995}.
For the \emph{closed nonlinear growth rule}
\begin{align}
m_1 = 1, \qquad m_l = 2^{l-1} + 1 \quad (l \geq 2),
\label{eq:growth_rule}
\end{align}
the number of function evaluations required for $\mathcal{S}_q^d = \mathcal{S}_{\ell+d}^d$ satisfies the asymptotic bound
\[
m(\ell + d, d) \;\simeq\; \frac{2^\ell}{\ell!}\, d^\ell \qquad (d \to \infty, \text{ $\ell$ fixed}),
\]
see \cite{Novak1997, Novak1999}.
For fixed sparsity level $\ell$, the computational cost thus exhibits polynomial growth in $d$ of degree $\ell$, in contrast to the exponential growth of the full tensorized rule. Once the univariate rules and the growth rule are fixed, we adopt the notation
\[
( \xi_j^{(\ell)}, w_j^{(\ell)} )_{j=1}^{m(\ell + d, d)} \subset [0,1]^d \times \R
\]
for the full collection of nodes and weights of the sparse grid rule $\mathcal{S}_{\ell + d}^d$, indexed arbitrarily.
When the sparsity level $\ell$ is fixed, we may also drop the superscript $(\ell)$.
\subsection{Clenshaw--Curtis Quadrature in One Dimension}
\label{subsec:clenshaw_curtis_rule}
The univariate building block of our sparse grid construction is the \emph{Clenshaw--Curtis rule}~\cite{Clenshaw1960}, designed for the uniform measure on $[-1,1]$.
The underlying idea is the substitution $x = \cos z$, which transforms the integration of $f \colon [-1,1] \to \R$ into the integration of $f(\cos z) \sin z$ on $[0, \pi]$.
Expanding $f(\cos z)$ in a Fourier cosine series $f(\cos z) = \tfrac{a_0}{2} + \sum_{j=1}^\infty a_j \cos(j z)$ yields the closed-form expression
\[
\int_{-1}^{1} f(x)\, \d x = \int_0^\pi f(\cos z) \sin z \, \d z = a_0 + \sum_{j=1}^\infty \frac{2\, a_{2j}}{1 - (2j)^2},
\]
so that the integration reduces to the computation of the cosine coefficients $a_j$.
The Clenshaw--Curtis $m$-point rule truncates this series at $j = m - 1$ and approximates the remaining coefficients $a_j$ by a discrete cosine transform of the values of $f$ at the Chebyshev extrema $\xi_j = \cos\!\big(\tfrac{(j-1)\pi}{m-1}\big)$ for $j = 1, \dots, m$, leading to a quadrature rule of the form $\sum_{j=1}^m w_j f(\xi_j)$ with weights $w_j$ that depend only on $m$.
Equivalently, the rule integrates exactly the unique polynomial of degree at most $m-1$ that interpolates $f$ at these nodes \cite[Section 2]{Sloan1978}. The weights $\{w_j\}_{j=1}^m$ implementing the Clenshaw--Curtis rule can be (pre-)computed in $\mathcal{O}(m \log m)$ operations \cite{SOMMARIVA2013682, Waldvogel2006}.
For integration on a general interval $[a, b]$, the nodes and weights are mapped affinely as $\xi_j \mapsto \tfrac{a+b}{2} + \tfrac{b-a}{2}\, \xi_j$ and $w_j \mapsto \tfrac{b-a}{2}\, w_j$.
Clenshaw--Curtis rules exhibit a range of desirable properties, which make them a practical default choice in a wide range of settings.
E.g., the Chebyshev extrema are \emph{nested}. That is, the $m_l$-point rule with $m_l = 2^{l-1} + 1$ is contained in the $m_{l+1}$-point rule, so that function evaluations on coarser levels are reused on finer ones.
\begin{remark}[Source measure for the univariate building block]
\label{rmk:cc_for_uniform}
The Clenshaw--Curtis rule above is designed for the Lebesgue/uniform measure on $[0,1]$. For a source $\nu$ with smooth density $f_\nu$, one can apply the same rule to the modified integrand $g \cdot f_\nu$, since $\int g\, \d\nu = \int g\, f_\nu\, \d\lambda^d$, and the convergence rates carry over to this setting.
\end{remark}
\subsection{Quadrature Error Bounds}
\label{subsec:quadrature_error_bounds}
The convergence rate of the Clenshaw--Curtis sparse grid quadrature \eqref{eq:smolyak} depends on the smoothness of the integrand.
We state the bound for both isotropic $C^k$-smoothness and the smaller class $C^k_{\mathrm{mix}}$ of functions with bounded mixed derivatives.
\subsubsection{Smoothness Spaces}
\label{subsubsec:smoothness_spaces}
Let $U \subset \R^{d_1}$ be an open and bounded set, and denote its closure by $\overline U$.
For a nonnegative integer $k$ and $\sigma \in \{\mathrm{iso}, \mathrm{mix}\}$, the smoothness space $C^k_\sigma(\overline U; \R^{d_2})$ consists of all functions $f \colon U \to \R^{d_2}$ whose partial derivatives $D^{\vbeta} f$ exist on $U$ for every multi-index $\vbeta \in \N_0^{d_1}$ with $|\vbeta|_\sigma \leq k$ and admit continuous extensions to $\overline U$, equipped with the norm
\begin{align}
\|f\|_{C^k_\sigma(\overline U; \R^{d_2})} := \max_{|\vbeta|_\sigma \leq k}\, \sup_{x \in \overline U} \, \big\| D^{\vbeta} f(x) \big\|_2,
\label{eq:ck_norm}
\end{align}
where $|\vbeta|_{\mathrm{iso}} := \sum_{i=1}^{d_1} \beta_i$ and $|\vbeta|_{\mathrm{mix}} := \max_i \beta_i$. We follow standard convention and abbreviate $C^k := C^k_{\mathrm{iso}}$ when no confusion arises.
Since $|\vbeta|_{\mathrm{mix}} \leq |\vbeta|_{\mathrm{iso}} \leq d_1 \cdot |\vbeta|_{\mathrm{mix}}$, the mixed space is strictly smaller than the isotropic one for $d_1 \geq 2$ and $k \geq 1$, $C^k_{\mathrm{mix}} \subsetneq C^k$, and the mixed norm dominates the isotropic norm.
We will write $C^k_\sigma(U; V)$ for functions whose image lies entirely within a subset $V \subset \R^{d_2}$.
\subsubsection{Error Bound}
\label{subsubsec:error_bound}
We recall the classical convergence results.
\begin{theorem}[Sparse grid quadrature error]
\label{thm:quadrature_error}
Let $d, k \in \N$ and $\sigma \in \{\mathrm{iso}, \mathrm{mix}\}$.
Let $\nu \in \mathcal{M}_1^+([0,1]^d)$ be a product probability measure with marginal densities $f_{\nu_i} \in C^k([0,1])$, and let $(\xi_j, w_j)_{j=1}^m$ denote the Clenshaw--Curtis sparse grid rule for $\nu$ with closed nonlinear growth \eqref{eq:growth_rule}, in the sense of Remark~\ref{rmk:cc_for_uniform}.
Suppose $g \in C^k_\sigma([0,1]^d; \R)$.
Then there exists a constant $c_{d,k}^\sigma > 0$, depending only on $d$, $k$, $\sigma$, and $\max_i \|f_{\nu_i}\|_{C^k}$, such that
\begin{align}
\bigg|\int_{[0,1]^d} g(x)\, \d\nu(x) - \sum_{j=1}^m w_j\, g(\xi_j)\bigg|
\leq c_{d,k}^\sigma\, r_\sigma(m, d, k)\, \|g\|_{C^k_\sigma},
\label{eq:quadrature_error}
\end{align}
where the rates are
\[
r_{\mathrm{iso}}(m, d, k) := m^{-k/d} (\log m)^{(d-1)(k/d + 1)},
\quad
r_{\mathrm{mix}}(m, d, k) := m^{-k} (\log m)^{(d-1)(k+1)}.
\]
\end{theorem}
\begin{proof}
For $g \in C^k_\sigma([-1,1]^d; \R)$ and $\lambda^d = \mathrm{Uniform}([-1,1]^d)$, 
the Clenshaw--Curtis sparse grid quadrature with Lebesgue weights 
$\{w_j^\lambda\}_{j=1}^m$ at the nodes $\{\xi_j\}_{j=1}^m$ satisfies
\[
\bigg|\int_{[-1,1]^d} g\, \d\lambda^d - \sum_{j=1}^m w_j^\lambda\, g(\xi_j)\bigg|
\;\leq\; \hat c_{d,k}^\sigma\, r_\sigma(m, d, k)\, \|g\|_{C^k_\sigma},
\]
with universal constants $\hat c_{d,k}^\sigma > 0$; see 
\cite[Theorem and Remark~2]{Novak1997} for $\sigma = \mathrm{iso}$ and 
\cite[Corollary and Remark~1]{Novak1997} for $\sigma = \mathrm{mix}$. 
The estimate extends to $[0,1]^d$ via the affine reparametrization of 
Section~\ref{subsec:clenshaw_curtis_rule}. For a non-uniform product source $\nu$ with $f_{\nu_i} \in C^k([0,1])$, 
applying this rule to $g \cdot f_\nu$, after reweighting the Lebesgue 
weights $w_j^\lambda$ via $w_j := w_j^\lambda\, f_\nu(\xi_j)$, yields
\[
\sum_{j=1}^m w_j\, g(\xi_j) \;=\; \sum_{j=1}^m w_j^\lambda\, (g f_\nu)(\xi_j) 
\;\approx\; \int g f_\nu\, \d\lambda^d \;=\; \int g\, \d\nu,
\]
at the same rate with $\|g \cdot f_\nu\|_{C^k_\sigma} \leq c_{d,k}\,\|g\|_{C^k_\sigma}\,\prod_i \|f_{\nu_i}\|_{C^k}$ 
by Leibniz, which is absorbed into $c_{d,k}^\sigma$.
\end{proof}
\begin{remark}
\label{rmk:other_quadratures}
As pointed out in \cite[Remark~1, Remark~3]{Novak1996}, error bounds of the form \eqref{eq:quadrature_error} are not inherently restricted to Clenshaw--Curtis quadrature. Analogous estimates hold for sparse grid rules built from other univariate building blocks, so that Theorem~\ref{thm:quadrature_error} extends to a broader class of sparse grid integration schemes.
\end{remark}
\section{Transport Theory}
\label{sec:transport}
We now introduce the Knothe--Rosenblatt (KR) transport, as a constructive existence statement for a transport map between $\nu$ and $\mu$.
Throughout this and subsequent sections several transport-related symbols appear in close proximity. To orient the reader, we adopt the following convention. $\Phi \colon [0,1]^d \to [0,1]^d$ denotes a generic transport map, $T$ the (deterministic) Knothe--Rosenblatt transport defined in \eqref{eq:kr_trafo}, and $\Phi^\theta$ a parametric ansatz (typically the time-one flow of a neuralODE with vector field $v^\theta$), $\Phi_t$ or $\Phi_{0,t}$ the time-$t$ flow map of an ODE. Finally, $\hat\Phi_n$ denotes the estimator learned from the sample $\chi_n$, which is either the empirical MLE flow (general regime, Section~\ref{sec:general_regime}) or the empirical quantile transport (diagonal regime, Section~\ref{sec:diagonal_regime}); the precise meaning is fixed by context.
\subsection{Knothe--Rosenblatt Transport}
\label{subsec:kr}
The \emph{Knothe--Rosenblatt transport} is a triangular map that recursively matches the marginals of $\nu$ to those of $\mu$, while preserving the alignment of previously matched coordinates via conditional distributions.
Following \cite{Santambrogio2015-lp} we restrict the construction to the cube $[0,1]^d$ to avoid additional technical complications and assume that the source and target measures $\nu, \mu \in \mathcal{M}_1^+([0,1]^d)$ admit continuous densities $f_\nu, f_\mu$ satisfying
\begin{align}
f_\nu(x),\; f_\mu(x) \geq \kappa > 0 \qquad \text{for all } x \in [0,1]^d
\label{eq:density_lower_bound}
\end{align}
for some constant $\kappa > 0$. For $\bullet \in \{\nu, \mu\}$ and $1 \leq k \leq d$, the $k$-dimensional marginal density is
\[
\hat f_{\bullet, k}(x_1, \dots, x_k) := \int_{[0,1]^{d-k}} f_\bullet(x_1, \dots, x_d) \, \d\lambda^{d-k}(x_{k+1}, \dots, x_d),
\]
with the convention $\hat f_{\bullet, 0} \equiv 1$.
The corresponding conditional densities and conditional CDFs are
\[
f_{\bullet, k}(x \mid x_1, \dots, x_{k-1}) := \frac{\hat f_{\bullet, k}(x_1, \dots, x_{k-1}, x)}{\hat f_{\bullet, k-1}(x_1, \dots, x_{k-1})},
\]
and $F_{\bullet, k}(x \mid x_1, \dots, x_{k-1}) := \int_0^x f_{\bullet, k}(z \mid x_1, \dots, x_{k-1}) \, \d z$ for $k = 1, \dots, d$.
The Knothe--Rosenblatt transport is then defined component-wise. The first coordinate is
\[
T_1(x_1) := F_{\mu, 1}^{-1} \circ F_{\nu, 1}(x_1),
\]
and for $2 \leq k \leq d$, the remaining components are constructed recursively as
\[
T_k(x_1, \dots, x_k) := F_{\mu, k}^{-1}\!\left(F_{\nu, k}(x_k \mid x_1, \dots, x_{k-1}) \,\big|\, T_1(x_1), \dots, T_{k-1}(x_1, \dots, x_{k-1})\right).
\]
The resulting triangular map $T \colon [0,1]^d \to [0,1]^d$ is
\begin{align}
T(x_1, \dots, x_d) := \big(T_1(x_1),\ T_2(x_1, x_2),\ \dots,\ T_d(x_1, \dots, x_d)\big)^\top.
\label{eq:kr_trafo}
\end{align}
\begin{theorem}[Knothe--Rosenblatt transport]
\label{thm:kr}
Let $\nu, \mu \in \mathcal{M}_1^+([0,1]^d)$ admit continuous densities satisfying \eqref{eq:density_lower_bound}.
The Knothe--Rosenblatt transport $T$ defined in \eqref{eq:kr_trafo} satisfies
\[
T_* \nu = \mu, \qquad f_\nu(x) = f_\mu\big(T(x)\big)\, \big|\!\det DT(x)\big| \quad \text{for all } x \in [0,1]^d.
\]
\end{theorem}
\subsection{NeuralODE}
\label{subsec:neuralode}
The underlying idea of neuralODE is to model
the transport $\Phi$ implicitly through a continuous deformation of the
source distribution into the target \cite{chen2019}.
Concretely, one specifies a time-dependent vector field $v_t \colon \R^d \to \R^d$, $t \in [0,1]$, that prescribes at each instant in which direction every point should move; integrating these instantaneous velocities over the time interval $[0,1]$ then transports a starting position $y_0 \sim \nu$ along a continuous trajectory to a final position $y(1)$ whose distribution is intended to match $\mu$.
The transport map $\Phi$ is thus the time-one endpoint of the flow induced by the ordinary differential equation
\begin{align}
\frac{\mathrm{d}}{\mathrm{d} t} y(t) = v\big(y(t), t\big), \qquad y(t_0) = y_0,
\label{eq:flow_ode}
\end{align}
where $v \colon \R^d \times [0,1] \to \R^d$ is the chosen vector field, for which we use the shorthand notation $v_t(x) := v(x, t)$.
Assuming $v$ is Lipschitz continuous in the spatial variable and continuous in time \cite{hartmann2002}, the \emph{flow map} $\Phi_{t_0, t} \colon \R^d \to \R^d$ is well-defined by $\Phi_{t_0, t}(y_0) := y(t)$, where $y(t)$ is the unique solution of \eqref{eq:flow_ode}.
Without loss of generality we restrict to the unit time interval and write $\Phi_t := \Phi_{0, t}$ for flows starting at $t_0 = 0$, and $\Phi := \Phi_{0, 1}$ for the flow map evaluated at $t = 1$.
Representing the transport as an ODE flow offers two practical advantages.
First, under the regularity assumptions above, the flow endpoint $\Phi$ is invertible. A sample $z$ from $\Phi_*\nu$ is generated by drawing $X \sim \nu$ and solving \eqref{eq:flow_ode} forward in time with $y_0 = X$ to obtain $x = \Phi(X)$, and the inverse $\Phi^{-1}$ is obtained by solving the same ODE backward in time.
Second, when the vector field satisfies $v \in C^1(\R^d \times [0,1]; \R^d)$, Liouville's formula provides an efficient integral representation for the log-determinant of the Jacobian, which yields the change-of-variables identity
\begin{align}
\log f_{\Phi_*\nu}(y) = \log f_\nu\big(\Phi^{-1}(y)\big) - \int_0^1 \operatorname{div}_y v\big(\Phi_t(\Phi^{-1}(y)), t\big) \, \d t,
\label{eq:liouville_log_det}
\end{align}
see \cite[Lemma 2.1]{ehrhardt2025}.
This expression is central to likelihood-based training, as it avoids explicit computation of the Jacobian determinant.
\subsubsection{Training via Maximum Likelihood Estimation}
To turn the abstract construction of the previous paragraph into a learnable procedure, the vector field $v$ is modeled through a parametric family $\{v^\theta\}_{\theta \in \Theta}$ of neural networks, with corresponding flow endpoint $\Phi^\theta$, which gives rise to the name \emph{neuralODE}.
A standard way to learn the parameter $\theta$ from data is maximum likelihood estimation. Given an i.i.d.\ sample $\chi_n := (X_1, \dots, X_n)$ from $\mu$, define the empirical negative log-likelihood
\begin{align}
\hat L_n(\theta, \chi_n) := -\frac{1}{n} \sum_{j=1}^n \log f_{\Phi^\theta_*\nu}(X_j),
\label{eq:neural_ode_log_likelihood}
\end{align}
and minimize $\hat L_n(\cdot, \chi_n)$ over $\Theta$ to obtain the estimator $\hat\theta_n$.
The representation \eqref{eq:liouville_log_det} enables efficient computation of the densities $f_{\Phi^\theta_*\nu}(X_j)$ along the flow \cite{chen2019}.
\begin{remark}[Flow matching]
A recent alternative training approach is \emph{Flow Matching}, which avoids explicit integration by regressing a time-dependent vector field $v^\theta$ onto a constructed reference dynamics; see \cite{lipman2023flow}.
In particular, the ODE solves required at every step of maximum likelihood training \eqref{eq:neural_ode_log_likelihood} are eliminated.
\end{remark}
\subsubsection{ODE Representation of the Knothe--Rosenblatt Transport}
It turns out that the Knothe--Rosenblatt transport $T$ admits a representation as the time-one endpoint of an ODE flow of the form \eqref{eq:flow_ode}, which gives a theoretical guarantee that an estimate $\Phi^\theta$ can in principle reach a desired transport between $\nu$ and $\mu$, whenever condition \eqref{eq:density_lower_bound} is met.
Following \cite{marzouk2023, marzouk2025}, the construction is based on the straight-line interpolation between the identity and $T$,
\begin{align}
I_t(x) := t\, T(x) + (1 - t)\, x, \qquad (x, t) \in [0,1]^d \times [0,1],
\label{eq:displacement_interpolation}
\end{align}
which traces a path from $x$ at time $t = 0$ to $T(x)$ at time $t = 1$.
By \cite[Theorem 3.4]{marzouk2025}, $I_t$ is a diffeomorphism of $[0,1]^d$ for each $t \in [0,1]$, so that the inverse interpolation $G(x, t) := I_t^{-1}(x)$ identifies the initial position $x_0 = G(x, t)$ reaching $x$ at time $t$ along the interpolation path.
The associated \emph{target vector field}
\begin{align}
u^\mu_t(y) := T\big(G(y, t)\big) - G(y, t), \qquad (y, t) \in [0,1]^d \times [0,1],
\label{eq:target_vector_field}
\end{align}
points in the direction of the remaining displacement toward $T$ and generates the flow $\Phi_t = I_t$ with endpoint $\Phi_1 = T$, in the sense that
\[
\frac{\d{}}{\d t}\Phi_t(x) = u^\mu_t\big(\Phi_t(x)\big), \qquad \Phi_0(x) = x,
\]
see \cite[Theorem 3.4]{marzouk2025}; cf. fig.~\ref{fig:kr_transport}.
\begin{figure}[ht]
\centering
\includegraphics[width=0.85\linewidth]{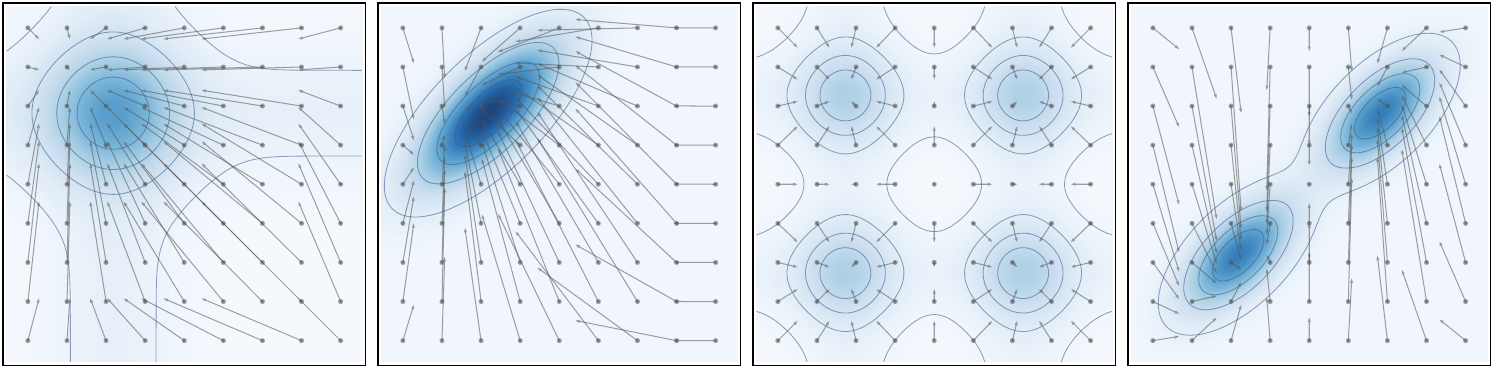}
\caption{Displacement interpolation between uniform source $\nu = \mathrm{Uniform}([0,1]^2)$ and four representative target measures $\mu$ on $[0,1]^2$. Grey points show a uniform grid in the source, with the trajectories $t \mapsto I_t(x) = t\,T(x) + (1-t)\,x$ traced out by the displacement interpolation \eqref{eq:displacement_interpolation} and arrowheads marking the endpoints $T(x)$ in the target.}
\label{fig:kr_transport}
\end{figure}
\subsection{Regularity-Preserving Diffeomorphisms}
\label{subsec:regularity_preserving}
When performing LtI, we integrate a composition $\qoi \circ \Phi$ of the quantity of interest with a learned transport $\Phi$, cf. Section~\ref{subsec:learning_to_integrate}. To apply Theorem~\ref{thm:quadrature_error}, it is crucial to analyze when this composition is of a given regularity.
There is in general no reason to expect that composition improves the regularity of a function. By the \emph{Faà di Bruno formula} \cite{Ma2009-rj}, the derivatives of $\qoi \circ \Phi$ depend on the derivatives of both $\qoi$ and $\Phi$, so that any irregularity in either factor propagates to the composition.
The regularity of the quantity of interest is typically assumed as part of the modeling setup, and we therefore ask under which conditions composition with $\Phi$ \emph{preserves} this regularity.
\subsubsection{Preservation of Isotropic Regularity}
\label{subsubsec:isotropic_composition}
Once both $\qoi$ and $\Phi$ are of class $C^k$, their composition is $C^k$ as well, with a quantitative bound on the norm.
\begin{theorem}[Isotropic composition bound]
\label{thm:iso_composition}
Let $d, k \in \N$, $\qoi \in C^k([0,1]^d; \R)$, and $\Phi \in C^k([0,1]^d; [0,1]^d)$.
Then $\qoi \circ \Phi \in C^k([0,1]^d; \R)$, and there exists a constant $\dot c_{d,k} > 0$ depending only on $d$ and $k$ such that
\begin{align}
\|\qoi \circ \Phi\|_{C^k} \leq \dot c_{d,k}\, \|\qoi\|_{C^k} \big(1 + \|\Phi\|_{C^k}\big)^k.
\label{eq:iso_composition_bound}
\end{align}
\end{theorem}
\begin{proof}
Apply the multivariate Faà di Bruno formula \cite[Corollary 12]{Ma2009-rj}.
\end{proof}
One can prove that the flow from~\eqref{eq:flow_ode} is smooth, if the underlying velocity field is.
\begin{theorem}[Smoothness of the flow]
\label{thm:flow_smoothness}
Let $k \in \N$ and let $v \in C^k([0,1]^d \times [0,1]; \R^d)$ be a time-dependent vector field whose flow $\Phi_t$ generated by \eqref{eq:flow_ode} preserves $[0,1]^d$ for all $t \in [0,1]$.
Then the time-one flow map $\Phi := \Phi_1 \in C^k([0,1]^d; [0,1]^d)$.
\end{theorem}
\begin{proof}
This is a classical result; see, e.g., \cite[Theorem V.4.1]{hartmann2002}.
\end{proof}
Theorems~\ref{thm:flow_smoothness} and~\ref{thm:iso_composition} together show that the isotropic rate $r_{\mathrm{iso}}(m, d, j)$ of Theorem~\ref{thm:quadrature_error} is available for any $C^j$-flow $\Phi$ of a $C^j$-vector field, $j \in \N$. In particular, the neuralODE construction with $\mathrm{ReLU}^s$ activations yields $\Phi \in C^{s-1}([0,1]^d; [0,1]^d)$, thus achieving the rate $r_{\mathrm{iso}}(m, d, s-1) = m^{-(s-1)/d} (\log m)^{(d-1)((s-1)/d + 1)}$ for $\qoi \in C^{s-1}([0,1]^d; \R)$.
\subsubsection{Preservation of Mixed Regularity Forces Diagonal Maps}
\label{subsubsec:diagonal_composition}
The faster rate $r_{\mathrm{mix}}(m, d, k)$ requires the stronger condition $\qoi \circ \Phi \in C^k_{\mathrm{mix}}$.
The next result shows that this is a strong structural constraint on $\Phi$. Only diagonal diffeomorphisms (up to a permutation of coordinates) preserve mixed regularity.
\begin{proposition}[Mixed regularity forces diagonality]
\label{prop:diagonal}
Let $\Phi \colon [0,1]^d \to [0,1]^d$ be a $C^1$-diffeomorphism such that $f \circ \Phi \in C^k_{\mathrm{mix}}([0,1]^d)$ for every $f \in C^k_{\mathrm{mix}}([0,1]^d)$.
Then there exists a permutation $\pi \in S_d$ and one-dimensional $C^1$-diffeomorphisms $\phi_1, \dots, \phi_d \colon [0,1] \to [0,1]$ such that
\[
\Phi(x) = \big(\phi_1(x_{\pi(1)}), \dots, \phi_d(x_{\pi(d)})\big).
\]
\end{proposition}
\begin{proof}[Sketch]
The hypothesis applied to the coordinate projections $\pi_i \in C^k_{\mathrm{mix}}$
yields $\Phi_i = \pi_i \circ \Phi \in C^k_{\mathrm{mix}}$, so all mixed
derivatives of $\Phi$ up to order $k$ exist. By the closed graph theorem, the composition operator
$T_\Phi \colon f \mapsto f \circ \Phi$ is bounded from
$C^k_{\mathrm{mix}}([0,1]^d)$ into itself. Testing $T_\Phi$ on the
oscillating functions $f_N(y) := N^{-k} \sin(N y_i)$ and
$\tilde f_N(y) := N^{-k} \cos(N y_i)$ for each coordinate~$i$ and
expanding the mixed top-order derivative
$\partial_j^k \partial_l^k (f_N \circ \Phi)$ via the Faà di Bruno
formula, the leading term is proportional to
$N^k\, (\partial_j \Phi_i)^k\,(\partial_l \Phi_i)^k$. The Pythagorean
identity $\sin^2 + \cos^2 = 1$ eliminates the oscillatory factor and
yields a uniform bound on $J(x)^2 := (\partial_j \Phi_i(x))^{2k} (\partial_l \Phi_i(x))^{2k}$ that vanishes as $N\to\infty$. Since $k \geq 1$, this yields
$\partial_j \Phi_i \cdot \partial_l \Phi_i \equiv 0$ for all $i$ and
all $j \neq l$. Each row of $D\Phi$ therefore has at most one nonzero
entry, and since $\det D\Phi \neq 0$, the matrix $D\Phi$ is a
generalized permutation matrix at every point. Continuity and
connectedness of $[0,1]^d$ then force the permutation to be globally
constant. The full argument is given in
Appendix~\ref{app:diagonality_proof}.
\end{proof}
By Proposition~\ref{prop:diagonal} applied to $\qoi \in C^k_{\mathrm{mix}}$,
the mixed-regularity rate $r_{\mathrm{mix}}$ is available only if $\Phi$ is
diagonal up to a permutation of coordinates. Since $\nu$ is a product measure
and diagonal maps preserve product structure, the target $\mu = \Phi_*\nu$
must then itself be a product measure.
However, in the case of product targets, the neuralODE machinery is unnecessarily heavy, as the KR map, cf. \eqref{eq:kr_trafo},  reduces to a coordinatewise composition of inverse marginal CDFs, fully determined by the marginals of $\mu$, which can be estimated directly from samples by their empirical counterparts.
\subsection{Learning Product Measures by Empirical Quantile Transport}
\label{subsec:quantile_estimator}
Following \eqref{eq:kr_trafo}, when both source and target $\nu, \mu \in \mathcal{M}_1^+([0,1]^d)$ are product measures with continuous, strictly positive densities $\nu = \bigotimes_{i=1}^d \nu_i,\mu = \bigotimes_{i=1}^d \mu_i$,
the conditional CDFs reduce to the marginal CDFs and the Knothe--Rosenblatt transport simplifies to the coordinatewise composition of inverse marginal CDFs,
\begin{align}
T(x) = \big(F_{\mu_1}^{-1} \circ F_{\nu_1}(x_1), \dots, F_{\mu_d}^{-1} \circ F_{\nu_d}(x_d)\big)^\top, \qquad x \in [0,1]^d,
\label{eq:quantile_transport}
\end{align}
where $F_{\bullet, i}(y) := \bullet_i([0, y])$ denotes the $i$-th marginal CDF of $\bullet \in \{\nu, \mu\}$.
Each component of $T$ depends only on a single input coordinate and is fully determined by the corresponding pair of one-dimensional marginals $(\nu_i, \mu_i)$.
The source marginals $F_{\nu_i}$ are known in closed form by assumption, while the target marginals $F_{\mu_i}$ are unknown and must be estimated from data.
Given i.i.d.\ samples $\chi_n := (X_1, \dots, X_n)$ with $X_j = (X_j^{(1)}, \dots, X_j^{(d)}) \sim \mu$, the marginal $\mu_i$ is estimated by the \emph{empirical CDF}
\begin{align}
\widehat F_{\mu_i, n}(y) := \frac{1}{n} \sum_{j=1}^n \mathds{1}\big\{X_j^{(i)} \leq y\big\}, \qquad y \in [0,1].
\label{eq:empirical_cdf}
\end{align}
Since the empirical CDF is a step function and not strictly increasing, its inverse is taken in the generalized sense as the \emph{empirical quantile function}
\begin{align}
\widehat F_{\mu_i, n}^{-1}(u) := \inf\!\left\{y \in [0,1] \,:\, \widehat F_{\mu_i, n}(y) \geq u\right\}, \qquad u \in (0, 1].
\label{eq:empirical_quantile}
\end{align}
The \emph{empirical quantile transport} is then defined coordinatewise by composing the empirical quantile of $\mu_i$ with the known source CDF $F_{\nu_i}$,
\begin{align}
\hat\Phi_n(x) := \big(\widehat F_{\mu_1, n}^{-1} \circ F_{\nu_1}(x_1), \dots, \widehat F_{\mu_d, n}^{-1} \circ F_{\nu_d}(x_d)\big)^\top, \qquad x \in [0,1]^d.
\label{eq:empirical_quantile_transport}
\end{align}
\begin{figure}[ht]
    \centering
    \includegraphics[width=0.85\linewidth]{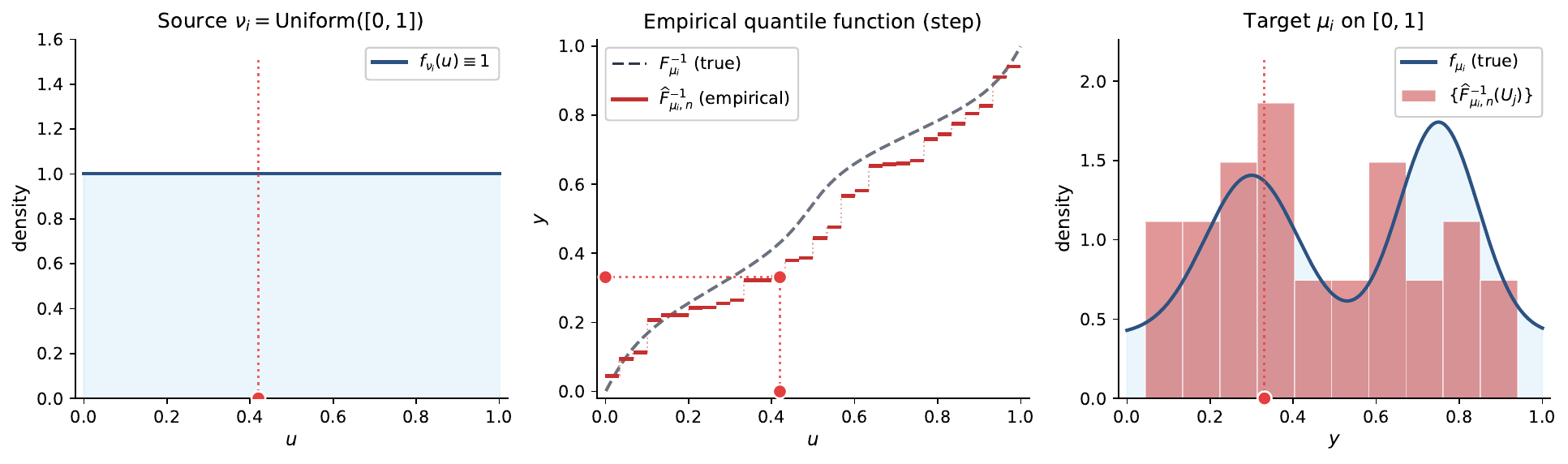}
    \caption{Empirical quantile transport in one coordinate. A uniform sample (left) is mapped through the empirical inverse CDF $\widehat F_{\mu_i, n}^{-1}$ (center, red), an approximation of $F_{\mu_i}^{-1}$ (dashed), onto the target $\mu_i$ (right). The full transport \eqref{eq:empirical_quantile_transport} applies this construction coordinatewise.}
    \label{fig:eq-transport}
\end{figure}
In practice, evaluating $\widehat F_{\mu_i, n}^{-1}$ amounts to looking up an order statistic of the samples $X_1^{(i)}, \dots, X_n^{(i)}$, cf. fig.~\ref{fig:eq-transport}. Sorting the marginal sample produces all empirical quantiles simultaneously in $\mathcal{O}(n \log n)$ operations per coordinate, after which any evaluation $\widehat F_{\mu_i, n}^{-1}(u)$ reduces to a binary search in $\mathcal{O}(\log n)$.
\section{PAC Learnability of Sparse Grid Integration using NeuralODE}
\label{sec:general_regime}
In this section we treat the general regime, in which the target $\mu$ is an arbitrary distribution on $[0,1]^d$.
\subsection{Hypothesis Space and Estimator}
\label{subsubsec:general_hypothesis}
\begin{assumption}
\label{ass:general_regime}
Fix $d, k \in \N$. The source $\nu$ and target $\mu$ in $\mathcal{M}_1^+([0,1]^d)$ satisfy:
\begin{enumerate}
  \item[\textup{(A1)}] The source $\nu$ is a product measure,
    $f_\nu(x) = \prod_{j=1}^d f_{\nu_j}(x_j)$, with marginal densities
    $f_{\nu_j} \in C^k([0,1])$.
  \item[\textup{(A2)}] The target $\mu$ has density
    $f_\mu \in C^k([0,1]^d)$.
  \item[\textup{(A3)}] There exist constants
    $0 < \kappa \le \mathcal K < \infty$ with
    $\kappa \le f_\nu, f_\mu \le \mathcal K$ on $[0,1]^d$.
  \item[\textup{(A4)}] There exists a constant $M \ge 1$ with
    $\|f_\mu\|_{C^k([0,1]^d)} \le M$ and
    $\|f_{\nu_j}\|_{C^k([0,1])} \le M$ for all $j = 1, \dots, d$.
\end{enumerate}
\end{assumption}
We denote by $\mathcal{T}$ the class of target distributions $\mu \in \mathcal{M}_1^+([0,1]^d)$ for which Assumption~\ref{ass:general_regime} holds, with $\nu$ and the constants $(\kappa, \mathcal{K}, M)$ fixed. The idea is to approximate a transport $\Phi$ by the time-one flow of a neuralODE whose vector field is parameterized by a $\mathrm{ReLU}^s$-network of activation order $s \geq 2$, ensuring that the resulting flow lies in $C^{s-1}$. 
We follow the construction of \cite[Sections 4.1--4.3]{marzouk2023}.
For integers $d_1, d_2, L, W, S \in \N$ and $B \geq 1$, let $\Phi_s^{d_1, d_2}(L, W, S, B)$ denote the class of fully connected $\mathrm{ReLU}^s$-networks $f_{\mathrm{NN}} \colon [0,1]^{d_1} \to \R^{d_2}$ of depth at most $L$, width at most $W$, with at most $S$ non-zero weights and biases, each of absolute value at most $B$ (full definition in Appendix~\ref{app:relus}). To enforce that the flow $\Phi_t^\theta$ preserves the cube $[0,1]^d$ across the entire time interval $[0,1]$, we impose a no-flux boundary condition on the vector field $v^\theta$. Following \cite[Definition 4.7]{marzouk2023}, this is achieved by multiplying the network output coordinatewise with the cut-off function $\chi_d(x) := (x_1(1-x_1), \dots, x_d(1-x_d))^\top$, whose $i$-th component vanishes precisely when $x_i \in \{0, 1\}$. Setting $\Omega := [0,1]^d \times [0,1]$, the resulting ansatz class is
\begin{equation*}
\mathcal{F}^{\mathrm{ansatz}}_s(L, W, S, B) := \bigl\{(x, t) \mapsto f_{\mathrm{NN}}(x, t) \odot \chi_d(x) \,:\, f_{\mathrm{NN}} \in \Phi_s^{d+1, d}(L, W, S, B)\bigr\},
\label{eq:ansatz_main}
\end{equation*}
where $\odot$ denotes coordinatewise multiplication. By construction, every element of $\mathcal{F}^{\mathrm{ansatz}}_s(L, W, S, B)$ satisfies the no-flux condition $v_i(x, t) = 0$ whenever $x_i \in \{0, 1\}$, for all $i = 1, \dots, d$ and all $t \in [0,1]$, and lies in $C^{s-1}(\Omega; \R^d)$. In particular, $[0,1]^d$ is invariant under the flow $\Phi_t^\theta$, and the closed-domain regularity $\Phi^\theta \in C^{s-1}([0,1]^d; [0,1]^d)$ follows by differentiating the flow ODE~\eqref{eq:flow_ode} on the closed cube, where $v^\theta$ is $C^{s-1}$.
To control the smoothness of the resulting flow $\Phi^\theta$ uniformly across the parameter space, we follow \cite{marzouk2023,marzouk2025} and additionally truncate the vector field in the $C^{s-1}$-norm. For a truncation radius $r > 0$, the hypothesis class is
\begin{equation}
\mathcal{F}_s^{L, W, S, B, r} := \mathcal{F}^{\mathrm{ansatz}}_s(L, W, S, B) \cap \bigl\{f \in C^{s-1}(\Omega) \,:\, \|f\|_{C^{s-1}(\Omega)} \leq r,\ \|f\|_{W^{2,\infty}(\Omega)} \leq r\bigr\}.
\label{eq:hypothesis_main}
\end{equation}
The joint $C^{s-1}$/$W^{2,\infty}$-bound on $v^\theta$ serves a twofold purpose. On the one hand, the explicit $W^{2,\infty}$-bound is required for the velocity-field class definition of \cite[Definition 4.7]{marzouk2023} and the $C^1$-stability theorems \cite[Theorem 4.4]{marzouk2023} feeding into the metric-entropy estimates underlying the statistical convergence theory in Appendix~\ref{app:relus}. For $s \geq 3$ this $W^{2,\infty}$-bound is implied by the $C^{s-1}$-bound via the trivial inclusion $C^{s-1}(\Omega) \subset C^{2}(\Omega) \subset W^{2,\infty}(\Omega)$, so the constraint is redundant; for $s = 2$ ($\mathrm{ReQU}$), networks in $\mathcal{F}_2^{\mathrm{ansatz}}$ are piecewise polynomial of degree depending on $L$, so their Hessians are piecewise
polynomials of bounded magnitude on compact domains for every choice of $(L, W, S, B)$. Thus, the $W^{2,\infty}$-radius~$r$ is an additional explicit truncation that is satisfiable for every choice of $(L, W, S, B)$ and absorbed into the approximation construction of Appendix~\ref{app:relus}. On the other hand, the $C^{s-1}$-bound propagates via Gronwall's inequality applied to successive derivatives of the flow ODE \eqref{eq:flow_ode} (cf.\ Theorem~\ref{thm:flow_smoothness}) to a uniform $C^{s-1}$-bound on the flow $\Phi^\theta$, which is needed to control $\|\qoi \circ \Phi^\theta\|_{C^{s-1}}$ uniformly in $\theta$.

The estimator is the empirical maximum likelihood estimator over $\mathcal{F}_s^{L, W, S, B, r}$ and samples $X_j \sim \mu$ in the sense of eq. \eqref{eq:neural_ode_log_likelihood}, i.e.
\begin{equation}
\hat\theta_n \in \arg\min_{\theta \in [-B, B]^S} \hat L_n(\theta, \chi_n) = \arg\min_{\theta \in [-B, B]^S}-\frac{1}{n} \sum_{j=1}^n \log f_{\Phi^\theta_*\nu}(X_j), \label{eq:erm_learner}
\end{equation}
with associated transport estimator $\Phi^{\hat\theta_n}$. Here we identify $\theta \in [-B,B]^S$ with elements of $\mathcal{F}_s^{L,W,S,B,r}$ via a fixed sparsity pattern of the $\mathrm{ReLU}^s$-network. We also note that the minimization in \eqref{eq:erm_learner} is implicitly restricted to those
$\theta \in [-B, B]^S$ whose corresponding network satisfies both
$\|f_{\mathrm{NN}}^\theta \odot \chi_d\|_{C^{s-1}(\Omega)} \le r$ and
$\|f_{\mathrm{NN}}^\theta \odot \chi_d\|_{W^{2,\infty}(\Omega)} \le r$. This feasible set is the intersection of the compact box $[-B,B]^S$ with two closed convex constraints. Together with continuity of $\theta \mapsto \hat L_n(\theta, \chi_n)$ this guarantees existence of a minimizer. Non-emptiness of the feasible set for the parameter choices of Theorem~\ref{thm:pac_learning_error_2} follows from the approximation construction in Appendix~\ref{app:relus}, specifically the verification at the end of the \emph{approximation error} paragraph in the proof of Theorem~\ref{thm:pac_learning_error_2}, where the approximating network $\hat v$ is shown to lie in $\mathcal{F}_s^{L,W,S,B,r}$ for $r = \mathcal{O}(1)$.
\subsection{Error Decomposition}
The total integration error $\varepsilon^{\mathrm{tot}}_{n,m}(\qoi)$ defined in \eqref{eq:total_error} combines two distinct sources of error. The \emph{learning error}, due to the discrepancy between the true target $\mu$ and the pushforward $\Phi^{\hat\theta_n}_*\nu$, and the \emph{quadrature error}, due to the finite budget of the sparse grid rule applied to the composed integrand $\qoi \circ \Phi^{\hat\theta_n}$.
Following Section~\ref{subsec:generative_learning} we choose the divergence $\mathcal{D}$ as the \emph{total variation distance}
\[
\TV(\mu, \Phi_* \nu) := \sup_{\|f\|_\infty \leq 1} \left| \int f \, \d\mu - \int f \, \d(\Phi_* \nu) \right|,
\]
which requires no regularity assumptions beyond boundedness of the integrand. With this dual definition one has $\TV(\mu, \nu) = \int |f_\mu - f_\nu|\,\d\lambda^d$, twice the standard probabilistic convention $\tfrac{1}{2}\int |f_\mu - f_\nu|\,\d\lambda^d$.
Stronger divergences such as the \emph{Kullback--Leibler divergence} or the (squared) \emph{Hellinger distance} dominate $\TV$; cf.~\cite{Polyanskiy_Wu_2025}, so any bound on the learning error in these divergences yields a corresponding bound in $\TV$. The following theorem makes the splitting between learning error and quadrature error precise.
\begin{theorem}[Decomposition of Total Error]
\label{thm:decomposition_of_total_error}
Let $d \in \N$, and let $\nu, \mu \in \mathcal{M}_1^+([0,1]^d)$. Let $\qoi \colon [0,1]^d \to \R$ be (essentially) bounded, and let $\Phi \colon [0,1]^d \to [0,1]^d$ be measurable. Given a quadrature rule $(w_j, \xi_j)_{j=1}^{m} \subset \R \times [0,1]^d$, the total error $\varepsilon^{\mathrm{tot}}_{n,m}(\qoi)$ satisfies the decomposition
\begin{align*}
\varepsilon^{\mathrm{tot}}_{n,m}(\qoi) \leq \|\qoi\|_\infty \,
\TV(\mu, \Phi_* \nu)
+ \bigg| \int_{[0,1]^d} \qoi(\Phi(x)) \, \d\nu(x) - \sum_{j = 1}^m w_j \, \qoi(\Phi(\xi_j)) \bigg|.
\end{align*}
\end{theorem}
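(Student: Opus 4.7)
The plan is to prove the decomposition by a standard triangle inequality argument, inserting an intermediate term given by the exact (unquantized) integral under the pushforward measure $\Phi_*\nu$.

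First, I introduce the pivot quantity $J := \int_{[a,b]^d} \qoi(\Phi(x)) \, d\nu(x)$. By the change-of-variables formula for pushforward measures (which holds because $\Phi$ is measurable and $\qoi$ is bounded, hence $\qoi \circ \Phi$ is $\nu$-integrable), one has
\[
J = \int_{[a,b]^d} \qoi(y) \, d(\Phi_*\nu)(y).
\]
Adding and subtracting $J$ inside the absolute value defining $\varepsilon^{\mathrm{total}}$ and invoking the triangle inequality then yields
\[
\varepsilon^{\mathrm{total}} \leq \Bigl| \int \qoi \, d\mu - \int \qoi \, d(\Phi_*\nu) \Bigr| + \Bigl| J - \sum_{j=1}^m w_j \qoi(\Phi(\xi_j)) \Bigr|.
\]
The second summand on the right is already the quadrature error as stated in the theorem, so no further work is required there.

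For the first summand, the idea is to use the variational characterization of total variation. If $\|\qoi\|_\infty = 0$ the bound is trivial (the term vanishes), so assume $\|\qoi\|_\infty > 0$ and set $\tilde{f} := \qoi/\|\qoi\|_\infty$, which satisfies $\|\tilde{f}\|_\infty \leq 1$. By the definition of $\mathrm{TV}$ given in the paragraph preceding the theorem,
\[
\Bigl| \int \tilde{f} \, d\mu - \int \tilde{f} \, d(\Phi_*\nu) \Bigr| \leq \mathrm{TV}(\mu, \Phi_*\nu),
\]
and multiplying through by $\|\qoi\|_\infty$ gives the claimed bound on the learning-error contribution. Combining the two bounds completes the proof.

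There is no genuine obstacle here: the argument is a direct triangle inequality once the pushforward identity for $J$ is recorded. The only point requiring a moment's care is the normalization step used to pass from a general bounded $\qoi$ to a test function with sup-norm bounded by one, which is why the factor $\|\qoi\|_\infty$ appears in front of the TV term. The statement's hypothesis that $\qoi$ is (essentially) bounded is exactly what makes this normalization legitimate, and measurability of $\Phi$ ensures that $\Phi_*\nu$ is a well-defined probability measure and that $\qoi \circ \Phi$ is measurable so that the change-of-variables identity for $J$ applies.
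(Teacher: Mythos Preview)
Your proof is correct and follows essentially the same approach as the paper: insert the intermediate term $\int \qoi\, d(\Phi_*\nu)$, apply the triangle inequality, and bound the first summand via the normalization $\qoi/\|\qoi\|_\infty$ together with the variational definition of $\mathrm{TV}$. Your write-up is in fact slightly more careful than the paper's in that you explicitly record the change-of-variables identity for the pivot and handle the trivial case $\|\qoi\|_\infty = 0$.
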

\begin{proof}
The statement follows immediately from the triangle inequality
\begin{align*}
\varepsilon^{\mathrm{tot}}_{n,m}(\qoi)
&\leq \bigg| \int_{[0,1]^d} \qoi(x) \, \d\mu(x) - \int_{[0,1]^d} \qoi(x) \, \d(\Phi_* \nu)(x) \bigg| \\
&\hspace{2em}+ \bigg| \int_{[0,1]^d} \qoi(\Phi(x)) \, \d\nu(x) - \sum_{j = 1}^{m} w_j \, \qoi(\Phi(\xi_j)) \bigg|.
\end{align*}
\end{proof}
\subsection{Learning Error}
The control of the learning error $\TV(\mu, \Phi^{\hat\theta_n}_*\nu)$ rests on the statistical convergence theory for neuralODEs developed in \cite{marzouk2023}, which provides a PAC bound in the (squared) Hellinger distance; defined for absolutely continuous $\mu,\mu'$ with densities $f_\mu,f_{\mu'}$ by
\[
\mathcal{H}^2(\mu, \mu') := \tfrac{1}{2} \int \bigl(\sqrt{f_\mu} - \sqrt{f_{\mu'}}\bigr)^2 \, \d\lambda^d;
\]
for the case $s = 2$ of $\mathrm{ReQU}$ activations.
We extend this theory to general $\mathrm{ReLU}^s$ activations with $s \geq 2$ in Appendix~\ref{app:relus} by generalizing \cite[Theorem 4.8]{marzouk2023} to accommodate the higher-order derivatives induced by $\eta_s(y) = \max(y, 0)^s$.
\subsubsection{PAC Bound on Learning Error}
\label{subsubsec:learning_error_pac}
We summarize the resulting Hellinger PAC bound, with proof deferred to the appendix.
\begin{theorem}[PAC bound on the learning error in Hellinger; proof in Appendix~\ref{app:relus}]
\label{thm:pac_learning_error_2}
Let Assumption~\ref{ass:general_regime} hold, and fix $s, k \in \N$ with $k \geq 2$ and $s \in \{2, \ldots, k+1\}$. Set
\begin{equation}
\eta \;:=\; \frac{2(k - 1)}{d + 1 + 2(k - 1)}.
\label{eq:eta}
\end{equation}
There exist parameter choices $L = \mathcal{O}(1)$, $W = S = \mathcal{O}(n^{(d+1)/(d+1+2(k-1))})$, $B = \mathcal{O}(n^{1/(d+1+2(k-1))})$, $r = \mathcal{O}(1)$ in the hypothesis class $\mathcal{F}_s^{L, W, S, B, r}$ from Section~\ref{subsubsec:general_hypothesis}, and constants $C_1, C_2 > 0$ depending only on $(d, s, k, \kappa, \mathcal{K}, M)$, such that for all $n \in \N$, all $\delta \in (0, 1)$, and all $\mu \in \mathcal{T}$, the MLE $\hat\theta_n$ from \eqref{eq:erm_learner} satisfies
\begin{equation}
\mathbb{P}\!\Bigl(\,\mathcal{H}^2\!\bigl(\mu, \Phi^{\hat\theta_n}_*\nu\bigr)
\;\le\; C_1\,n^{-\eta} \log n
\;+\; C_2\,n^{-1} \log(1/\delta)\Bigr)
\;\ge\; 1 - \delta.
\label{eq:pac_hellinger}
\end{equation}
\end{theorem}
The Hellinger bound transfers to total variation via the standard inequality $\TV \leq 2\sqrt{2}\, \mathcal{H}$, cf.~\cite{Polyanskiy_Wu_2025}, yielding the following sample complexity for the learning error.
\begin{corollary}[PAC bound in total variation; sample complexity]
\label{cor:pac_tv_learning_error}
Under the assumptions of Theorem~\ref{thm:pac_learning_error_2}, with the same hypothesis class and constants $\tilde{C}_1 = 2\sqrt{2}\,\sqrt{C_1}$, $\tilde{C}_2 = 2\sqrt{2}\,\sqrt{C_2}$,
\[
\mathbb{P}\!\Bigl(\,\TV\!\bigl(\mu, \Phi^{\hat\theta_n}_*\nu\bigr)
\;\le\; \tilde{C}_1\, n^{-\eta/2} \sqrt{\log n}
\;+\; \tilde{C}_2\, \sqrt{\log(1/\delta)/n}\Bigr) \ge 1 - \delta.
\]
Equivalently, for all $\varepsilon, \delta \in (0, 1)$, there exists
\begin{equation}
n_{\mathrm{TV}}(\varepsilon, \delta)
\;=\; \mathcal{O}\!\left(\varepsilon^{-2/\eta} \bigl(\log(1/\varepsilon)\bigr)^{1/\eta} + \varepsilon^{-2} \log(1/\delta)\right)
\label{eq:companion_n_tv}
\end{equation}
samples such that $\TV(\mu, \Phi^{\hat\theta_n}_*\nu) \le \varepsilon$ holds for all $n \geq n_{\mathrm{TV}}(\varepsilon, \delta)$ with probability at least $1 - \delta$.
\end{corollary}
\begin{remark}[On the learning rate]
\label{rmk:minimax}
Up to log-factors, the learning-error rate $n^{-\eta/2}$ with $\eta = 2(k-1)/(d+1+2(k-1))$ matches the minimax rate for nonparametric estimation of a $(k-1)$-smooth density on a $(d+1)$-dimensional domain in Hellinger or $L^2$ loss. The appearance of $d+1$ rather than $d$ in the denominator reflects that the velocity field is time-dependent; see \cite[Remark 4.9]{marzouk2023} for a detailed discussion.
\end{remark}
\subsection{Main PAC Bound}
\label{subsec:general_pac_bound}
We now combine the error decomposition (Theorem~\ref{thm:decomposition_of_total_error}), the learning error bound (Corollary~\ref{cor:pac_tv_learning_error}), and the quadrature error bound (Theorem~\ref{thm:quadrature_error}) into a single PAC consistency statement in the general regime.
\begin{theorem}[PAC consistency of LtI in the general regime]
\label{thm:pac_lti_general}
Let Assumption~\ref{ass:general_regime} hold with $k \geq 2$, fix any activation order $s \in \{2, \ldots, k+1\}$, and let $\qoi \in C^{s-1}([0,1]^d; \R)$. Let $\hat\Phi_n := \Phi^{\hat\theta_n}$ denote the neuralODE transport estimator from Theorem~\ref{thm:pac_learning_error_2} and let $(\xi_j, w_j)_{j=1}^m$ denote the Clenshaw--Curtis sparse grid rule for $\nu$ from Section~\ref{subsec:sparse_grid}.
Then there exist constants $C_1, C_2, C_3 > 0$ depending only on $(d, s, k, \kappa, \mathcal{K}, M, \qoi)$ such that, for all $n, m \in \N$ and $\delta \in (0, 1)$, the total error satisfies, with probability at least $1 - \delta$,
\begin{equation*}
\begin{aligned}
\varepsilon^{\mathrm{tot}}_{n,m}(\qoi)
\;\leq\;
&\underbrace{C_1\, n^{-\eta/2} \sqrt{\log n} + C_2\, \sqrt{\log(1/\delta)/n}}_{\text{learning error}} \\
&+ \underbrace{C_3\, m^{-(s-1)/d} (\log m)^{(d-1)((s-1)/d + 1)}}_{\text{quadrature error}},
\end{aligned}
\label{eq:pac_lti_general}
\end{equation*}
where $\eta = 2(k-1)/(d+1+2(k-1))$ is given by \eqref{eq:eta}.
Equivalently, for all $\varepsilon, \delta \in (0, 1)$, there exist sample and budget thresholds
\begin{align*}
n(\varepsilon, \delta)
&= \mathcal{O}\!\bigl(\varepsilon^{-2/\eta} (\log(1/\varepsilon))^{1/\eta} + \varepsilon^{-2} \log(1/\delta)\bigr), \\
m(\varepsilon)
&= \mathcal{O}\!\bigl(\varepsilon^{-d/(s-1)} (\log(1/\varepsilon))^{(d-1) + d(d-1)/(s-1)}\bigr),
\end{align*}
such that $\varepsilon^{\mathrm{tot}}_{n,m}(\qoi) \leq \varepsilon$ holds for all $n \geq n(\varepsilon, \delta)$ and $m \geq m(\varepsilon)$ with probability at least $1 - \delta$.
\end{theorem}
\begin{proof}
By Theorem~\ref{thm:decomposition_of_total_error},
\begin{align*}
\varepsilon^{\mathrm{tot}}_{n,m}(\qoi)
&\leq \|\qoi\|_\infty \TV(\mu, \hat\Phi_{n,*}\nu) \\
&\quad + \bigg|\int_{[0,1]^d} \qoi(\hat\Phi_n(x)) \,\d\nu(x) - \sum_{j=1}^m w_j\, \qoi(\hat\Phi_n(\xi_j))\bigg|.
\end{align*}
The first term is bounded by Corollary~\ref{cor:pac_tv_learning_error} with probability at least $1 - \delta$, contributing the learning error of \eqref{eq:pac_lti_general}.
For the second term, we control $\|\qoi \circ \hat\Phi_n\|_{C^{s-1}}$ uniformly in $n$ in three steps.
\textit{(a) Vector field bound.} The truncation in the hypothesis class $\mathcal{F}_s^{L,W,S,B,r}$ defined in \eqref{eq:hypothesis_main} enforces $\|v^{\hat\theta_n}\|_{C^{s-1}(\Omega)} \leq r$ uniformly in $n$, with $r = \mathcal{O}(1)$ as ensured by Theorem~\ref{thm:pac_learning_error_2}.
\textit{(b) Flow bound.} The no-flux condition $v^{\hat\theta_n} \cdot \nu_x \equiv 0$
on $\partial[0,1]^d \times [0,1]$ enforced by the cut-off $\chi_d$ ensures that the
flow $\hat\Phi_n$ preserves $[0,1]^d$; trajectories starting in $[0,1]^d$ remain
in $[0,1]^d$, and regularity propagates from $v^{\hat\theta_n}$ to $\hat\Phi_n$ by
differentiating the flow ODE~\eqref{eq:flow_ode} in $x$ up to and including the
boundary, since $v^{\hat\theta_n}$ is $C^{s-1}$ on the closed domain $\Omega$. In
particular, by Theorem~\ref{thm:flow_smoothness}, $v^{\hat\theta_n} \in C^{s-1}(\Omega; \R^d)$
implies $\hat\Phi_n \in C^{s-1}([0,1]^d; [0,1]^d)$. Quantitatively, applying
\cite[Appendix~C, Lemma~7]{TrouveYounes2005} with $B = C^{s-1}([0,1]^d; \R^d)$,
$p = s-1$, and $T = 1$, the $C^{s-1}$-truncation $\|v^{\hat\theta_n}\|_{C^{s-1}(\Omega)} \leq r$
enforced by~\eqref{eq:hypothesis_main} propagates via Gronwall's inequality
applied to the variational equations satisfied by successive derivatives of the
flow to a uniform-in-$n$ bound
\begin{equation}\label{eq:flow_Cs_bound}
    \|\hat\Phi_n\|_{C^{s-1}([0,1]^d; [0,1]^d)} \;\leq\; C\, e^{C' r},
\end{equation}
with constants $C, C' > 0$ depending only on $s$ and $d$.
\textit{(c) Composition bound.} By Theorem~\ref{thm:iso_composition} (multivariate Faà di Bruno formula), the composition $\qoi \circ \hat\Phi_n$ satisfies
\[
\|\qoi \circ \hat\Phi_n\|_{C^{s-1}} \;\leq\; \dot c_{d, s-1}\, \|\qoi\|_{C^{s-1}}\, \bigl(1 + \|\hat\Phi_n\|_{C^{s-1}}\bigr)^{s-1},
\]
which is uniformly bounded in $n$ by step (b). Theorem~\ref{thm:quadrature_error} applied with $\sigma = \mathrm{iso}$ and smoothness index $s-1$ then yields the quadrature error of \eqref{eq:pac_lti_general}.
The sample and budget complexities follow by inverting each error term in $\varepsilon/2$ and $\delta$.
\end{proof}

\begin{remark}[Comparison with naive Monte Carlo]
\label{rmk:mc_vs_lti_general}
Comparing the asymptotic complexities $m(\varepsilon) = \mathcal{O}(\varepsilon^{-d/(s-1)})$ from Theorem~\ref{thm:pac_lti_general} and $n_{\mathrm{MC}}(\varepsilon, \delta) = \mathcal{O}(\varepsilon^{-2})$ from \eqref{eq:mc_rate}, LtI asymptotically outperforms Monte Carlo in $\qoi$-evaluations precisely when $s - 1 > d/2$. 
\end{remark}

\begin{remark}[Practical reach of the asymptotic rates]
\label{rem:constants}
The constants $C_1, C_2, C_3$ in Theorem~\ref{thm:pac_lti_general} are stated in
their asymptotic form. A closer look at the proof reveals three compounded
sources of dimensional dependence. The Faà di Bruno bound of
Theorem~\ref{thm:iso_composition} contributes a constant
combinatorial in $(d, s)$; the sparse-grid prefactor $c^{\mathrm{iso}}_{d, s-1}$
of Theorem~\ref{thm:quadrature_error} carries a factor of order $(d-1)!$;
and the iterated-Gronwall step~\eqref{eq:flow_Cs_bound} provides a constant that grows
exponentially in $d$ through successive variational equations. Together with
the $(\log m)^{(d-1)((s-1)/d + 1)}$ prefactor of the quadrature itself,
these factors imply that an asymptotic advantage over Monte Carlo
(Remark~\ref{rmk:mc_vs_lti_general}) may be practically relevant only for
moderate dimensions and sufficiently large activation order $s$; consistent also with \cite{bungartz2004sparse}.
\end{remark}

\section{PAC Learnability of Sparse Grid Integration via Empirical Quantiles}
\label{sec:diagonal_regime}
In this section we treat the diagonal regime, in which the target $\mu$ is a product measure on $[0,1]^d$.
By Proposition~\ref{prop:diagonal} together with the discussion in Section~\ref{subsubsec:diagonal_composition}, this is arguably the only regime in which the mixed-regularity rate $r_{\mathrm{mix}}(m, d, k)$ of Theorem~\ref{thm:quadrature_error} is available without further assumptions on $\qoi \in C_{\mathrm{mix}}^k$.
The transport reduces to the coordinatewise empirical quantile estimator $\hat\Phi_n$ from Section~\ref{subsec:quantile_estimator}.
\subsection{Assumptions}
\label{subsec:diagonal_assumptions}
We adopt the setup of Assumption~\ref{ass:general_regime}, with the additional structural restriction that $\mu$ is a product measure.
\begin{assumption}
\label{ass:diagonal_regime}
Fix $d, k \in \N$. The source $\nu$ and target $\mu$ in $\mathcal{M}_1^+([0,1]^d)$ satisfy:
\begin{enumerate}
  \item[\textup{(D1)}] Assumption~\ref{ass:general_regime} holds.
  \item[\textup{(D2)}] The target $\mu$ is a product measure, $\mu = \bigotimes_{i=1}^d \mu_i$, with marginal densities $f_{\mu_i} \in C^k([0,1])$.
\end{enumerate}
\end{assumption}
We denote by $\mathcal{T}_{\mathrm{diag}}$ the class of target distributions $\mu \in \mathcal{M}_1^+([0,1]^d)$ for which Assumption~\ref{ass:diagonal_regime} holds, with $\nu$ and the constants $(\kappa, \mathcal{K}, M)$ fixed. In particular, since $\mu$ is a product measure with $\|f_\mu\|_{C^k([0,1]^d)} \le M$ by (D1)+(A4), each marginal density satisfies $\|f_{\mu_i}\|_{C^k([0,1])} \le M\,\kappa^{-(d-1)}$, which we absorb into the class-dependent constants. The standard choice $\nu = \mathrm{Uniform}([0,1]^d)$ is recovered as the special case $f_{\nu_j} \equiv 1$, but Assumption~\ref{ass:diagonal_regime} accommodates any product source with $C^k$ marginal densities bounded away from zero.
\subsection{Error Decomposition}
\label{subsec:diagonal_decomposition}
The total-variation decomposition of Theorem~\ref{thm:decomposition_of_total_error} is not the natural starting point in the diagonal regime.
Unlike the neuralODE estimator of Section~\ref{subsec:neuralode}, the empirical quantile transport~$\hat\Phi_n$ in~\eqref{eq:empirical_quantile_transport} is a direct, non-parametric estimator constructed coordinatewise from order statistics.
In particular, $\hat\Phi_n$ is a step function in each coordinate, cf. fig.~\ref{fig:eq-transport}, and admits no pointwise smoothness, which precludes the statistical learning machinery underlying the TV bound of Corollary~\ref{cor:pac_tv_learning_error}.
We therefore work with an alternative decomposition, which directly exploits the uniform convergence of $\hat\Phi_n$ .
\begin{lemma}[Error decomposition for the diagonal regime]
\label{lem:split}
Let $\qoi \in C^1([0,1]^d)$, and let $T$ and $\hat\Phi_n$ denote the Knothe--Rosenblatt transport~\eqref{eq:quantile_transport} and the empirical quantile transport~\eqref{eq:empirical_quantile_transport}, respectively. Then
\begin{align}
\label{eq:split}
\varepsilon^{\mathrm{tot}}_{n,m}(\qoi) &\leq
\underbrace{\left|\int_{[0,1]^d} \qoi(T(z))\,\d\nu(z) - \sum_{j=1}^m w_j\, \qoi(T(\xi_j))\right|}_{%
\displaystyle =:\, \varepsilon^{\mathrm{quad}}}
\\
&\quad \;+\;
\underbrace{d^2 \cdot \|\qoi\|_{C^1} \cdot \|\hat\Phi_n - T\|_\infty \cdot \|Q_m\|_1}_{%
\displaystyle =:\, \varepsilon^{\mathrm{stat}}}\!,
\end{align}
where $\|Q_m\|_1 := \sum_{j=1}^m |w_j|$ is the $\ell^1$-norm of the quadrature weights.
\end{lemma}
\begin{proof}
By triangle inequality,
\begin{align*}
\varepsilon^{\mathrm{tot}}_{n,m}(\qoi)
&\leq \left|\int_{[0,1]^d} \qoi(T(z))\,\d\nu(z) - \sum_{j=1}^m w_j\, \qoi(T(\xi_j))\right|
\\
&\quad + \left|\sum_{j=1}^m w_j\, \bigl(\qoi(T(\xi_j)) - \qoi(\hat\Phi_n(\xi_j))\bigr)\right|.
\end{align*}
For the second term, apply the mean value to $\qoi$ on the segment between $a = T(\xi_j)$ and $b = \hat\Phi_n(\xi_j)$. Denoting by $\nabla$ the gradient, we obtain $|\nabla\qoi \cdot (a-b)| \le \|\nabla\qoi\|_\infty \|a-b\|_1 \le d\,\|\nabla\qoi\|_\infty \|a-b\|_\infty$,
\[
|\qoi(T(\xi_j)) - \qoi(\hat\Phi_n(\xi_j))|
\;\leq\; d\,\|\nabla \qoi\|_\infty \cdot \|T(\xi_j) - \hat\Phi_n(\xi_j)\|_\infty.
\]
Combining, we obtain
\begin{align*}
\varepsilon^{\mathrm{tot}}_{n,m}(\qoi)
&\leq \varepsilon^{\mathrm{quad}}
+ \sum_{j=1}^m |w_j| \cdot d\,\|\nabla \qoi\|_\infty \cdot \|T(\xi_j) - \hat\Phi_n(\xi_j)\|_\infty \\
&\leq \varepsilon^{\mathrm{quad}}
+ d^2 \,\|\qoi\|_{C^1} \cdot \|Q_m\|_1 \cdot \|\hat\Phi_n - T\|_\infty,
\end{align*}
where the last inequality uses $\|\nabla \qoi\|_\infty \leq d \|\qoi\|_{C^1}$.
\end{proof}
\subsection{Learning Error}
\label{subsec:diagonal_learning}
The empirical quantile transport $\hat\Phi_n$ is constructed coordinatewise from the empirical CDFs $\widehat F_{\mu_i, n}$ via \eqref{eq:empirical_quantile_transport}, so its uniform error decomposes into $d$ independent univariate problems.
For each marginal, the Dvoretzky--Kiefer--Wolfowitz inequality \cite{Dvoretzky1956-oq, Massart1990} provides a non-asymptotic concentration bound for the empirical CDF in the supremum norm.
Combining this with the Lipschitz continuity of the inverse marginal CDFs $F_{\mu_i}^{-1}$, which follows from the density lower bound $f_{\mu_i} \geq \kappa$ in Assumption~\ref{ass:diagonal_regime}, yields a PAC bound for the learning error of $\hat\Phi_n$.
\begin{lemma}[PAC bound on the learning error in $L^\infty$]
\label{lem:dkw}
Under Assumption~\ref{ass:diagonal_regime}, the empirical quantile transport $\hat\Phi_n$ from \eqref{eq:empirical_quantile_transport} satisfies, for all $t > 0$,
\begin{equation}
\mathbb{P}\!\left(\|\hat\Phi_n - T\|_\infty > t\right)
\;\leq\; 2 d \, \exp\!\left(-2 n \kappa^2 t^2\right).
\label{eq:dkw_pac}
\end{equation}
Equivalently, for all $\varepsilon, \delta \in (0, 1)$, there exists a sample size threshold
\begin{equation}
n_\infty(\varepsilon, \delta) \;=\; \left\lceil \frac{1}{2 \kappa^2 \varepsilon^2} \log\!\left(\frac{2d}{\delta}\right) \right\rceil
\;=\; \mathcal{O}\!\left(\varepsilon^{-2} \bigl(\log d + \log(1/\delta)\bigr)\right)
\label{eq:dkw_sample_complexity}
\end{equation}
such that $\|\hat\Phi_n - T\|_\infty \leq \varepsilon$ holds for all $n \geq n_\infty(\varepsilon, \delta)$ with probability at least $1 - \delta$.
\end{lemma}
\begin{proof}
For each $i \in \{1, \dots, d\}$, the Dvoretzky--Kiefer--Wolfowitz inequality \cite{Dvoretzky1956-oq, Massart1990} gives, for all $\tau > 0$,
\begin{equation}
\mathbb{P}\!\left(\|\widehat F_{\mu_i, n} - F_{\mu_i}\|_\infty > \tau\right)
\;\leq\; 2 \, e^{-2 n \tau^2}.
\label{eq:dkw_cdf}
\end{equation}
We translate this into a deviation of the empirical quantile transport.
\smallskip
\textit{Step (i): Reduction to inverse CDF deviation.}
By the definitions of $\hat\Phi_n$ and $T$,
\begin{equation}
|\hat\Phi_{n,i}(x_i) - T_i(x_i)| \;=\; \bigl|\widehat F_{\mu_i, n}^{-1}\!\bigl(F_{\nu_i}(x_i)\bigr) - F_{\mu_i}^{-1}\!\bigl(F_{\nu_i}(x_i)\bigr)\bigr|.
\label{eq:dkw_pointwise}
\end{equation}
By Assumption~\ref{ass:diagonal_regime}~(D1)+(A3), the marginal density $f_{\nu_i} \in C^k([0,1])$ is continuous and bounded below by $\kappa > 0$, so $F_{\nu_i}$ is continuous and strictly increasing; together with $F_{\nu_i}(0) = 0$, $F_{\nu_i}(1) = 1$ this yields $F_{\nu_i}([0,1]) = [0,1]$ bijectively. Substituting $u := F_{\nu_i}(x_i)$ in \eqref{eq:dkw_pointwise} and taking suprema gives
\begin{equation}
\|\hat\Phi_{n,i} - T_i\|_\infty
\;=\; \sup_{u \in [0,1]} \bigl|\widehat F_{\mu_i, n}^{-1}(u) - F_{\mu_i}^{-1}(u)\bigr|
\;=\; \|\widehat F_{\mu_i, n}^{-1} - F_{\mu_i}^{-1}\|_\infty.
\label{eq:dkw_reduction}
\end{equation}
The bound is therefore independent of the source marginal $\nu_i$.
\smallskip
\textit{Step (ii): Inverse CDF sandwich including the boundary.}
Set $\varepsilon_n := \|\widehat F_{\mu_i, n} - F_{\mu_i}\|_\infty$ and extend $F_{\mu_i}^{-1}$ to all of $\R$ by clamping, $F_{\mu_i}^{-1}(v) := 0$ for $v \leq 0$ and $F_{\mu_i}^{-1}(v) := 1$ for $v \geq 1$.
For any $u \in (0, 1]$, the pointwise inequality $\widehat F_{\mu_i, n}(x) \geq F_{\mu_i}(x) - \varepsilon_n$ for all $x \in [0,1]$ gives the inclusion
\[
\bigl\{x \in [0,1] : F_{\mu_i}(x) \geq u + \varepsilon_n\bigr\}
\;\subseteq\;
\bigl\{x \in [0,1] : \widehat F_{\mu_i, n}(x) \geq u\bigr\}.
\]
If the left-hand side is empty (i.e.\ $u + \varepsilon_n > 1$), then by the clamping convention $F_{\mu_i}^{-1}(\min(u+\varepsilon_n, 1)) = F_{\mu_i}^{-1}(1) = 1$, and $\widehat F_{\mu_i, n}^{-1}(u) \le 1$ holds trivially. Otherwise, taking infima yields $\widehat F_{\mu_i, n}^{-1}(u) \leq F_{\mu_i}^{-1}(\min(u + \varepsilon_n, 1))$, valid for all $u \in (0, 1]$. The symmetric inclusion based on $\widehat F_{\mu_i, n}(x) \leq F_{\mu_i}(x) + \varepsilon_n$ yields $\widehat F_{\mu_i, n}^{-1}(u) \geq F_{\mu_i}^{-1}(\max(u - \varepsilon_n, 0))$. Combining,
\begin{equation}
F_{\mu_i}^{-1}\!\bigl(\max(u - \varepsilon_n, 0)\bigr)
\;\leq\; \widehat F_{\mu_i, n}^{-1}(u)
\;\leq\; F_{\mu_i}^{-1}\!\bigl(\min(u + \varepsilon_n, 1)\bigr)
\qquad \forall\, u \in (0, 1].
\label{eq:quantile_sandwich}
\end{equation}
By Assumption~\ref{ass:diagonal_regime}~(D1)+(D2), the marginal density satisfies
\[
f_{\mu_i}(x) = \int_{[0,1]^{d-1}} f_\mu(x, x_{-i})\, \d x_{-i} \geq \kappa
\]
on $[0,1]$, where the bound uses $f_\mu \geq \kappa$ from~(A3) together with the unit volume of the integration domain. Hence $F_{\mu_i}^{-1}$ restricted to $[0, 1]$ is Lipschitz with constant $1/\kappa$, and the clamping extension preserves Lipschitz continuity on $\R$ (the extension is constant outside $[0,1]$ and matches the boundary values $F_{\mu_i}^{-1}(0) = 0$, $F_{\mu_i}^{-1}(1) = 1$). Applying the Lipschitz bound to \eqref{eq:quantile_sandwich},
\begin{align*}
F_{\mu_i}^{-1}\!\bigl(\min(u + \varepsilon_n, 1)\bigr) - F_{\mu_i}^{-1}(u) &\leq (\min(u + \varepsilon_n, 1) - u)/\kappa \leq \varepsilon_n/\kappa, \\
F_{\mu_i}^{-1}(u) - F_{\mu_i}^{-1}\!\bigl(\max(u - \varepsilon_n, 0)\bigr) &\leq (u - \max(u - \varepsilon_n, 0))/\kappa \leq \varepsilon_n/\kappa,
\end{align*}
and the supremum over $u \in (0, 1]$ yields
\begin{equation}
\|\widehat F_{\mu_i, n}^{-1} - F_{\mu_i}^{-1}\|_\infty \;\leq\; \frac{1}{\kappa}\, \|\widehat F_{\mu_i, n} - F_{\mu_i}\|_\infty.
\label{eq:cdf_to_quantile}
\end{equation}
\smallskip
\textit{Step (iii): Union bound.}
Combining \eqref{eq:dkw_reduction}, \eqref{eq:cdf_to_quantile} and substituting $\tau = \kappa t$ in \eqref{eq:dkw_cdf} gives the marginal bound $\mathbb{P}(\|\hat\Phi_{n,i} - T_i\|_\infty > t) \leq 2 e^{-2 n \kappa^2 t^2}$. The full bound \eqref{eq:dkw_pac} follows by a union bound over the $d$ marginals. The sample complexity \eqref{eq:dkw_sample_complexity} is obtained by inverting \eqref{eq:dkw_pac} in $t$.
\end{proof}
\subsection{Main PAC Bound}
\label{subsec:diagonal_pac_bound}
To combine the error decomposition of Lemma~\ref{lem:split} with the DKW-based learning error of Lemma~\ref{lem:dkw} and the quadrature error of Theorem~\ref{thm:quadrature_error}, we need a uniform-in-$n$ bound on the $\ell^1$-norm of the Smolyak quadrature weights, also referred to as the \emph{stability constant} of the quadrature rule \cite{glaubnitz2020,bungartz2004sparse}, which we establish below.
\begin{lemma}[Quadrature norm of the CC-Smolyak operator]
\label{lem:quad_norm}
For each dimension $i = 1, \ldots, d$ and level $l \in \N$, let
$\{(w_{j,l}^{(i),\lambda}, \xi_{j,l}^{(i)})\}_{j=1}^{m_l}$ denote the
univariate Clenshaw--Curtis rule on $[0,1]$ for the Lebesgue measure with
closed nonlinear growth $m_1 = 1$, $m_l = 2^{l-1} + 1$ for $l > 1$.
Let $\mathcal{S}_q^{d,\lambda}$ denote the Smolyak operator built from these
weights via \eqref{eq:smolyak}, and let $\mathcal{S}_q^{d,\nu}$ be its
$\nu$-reweighted variant from Remark~\ref{rmk:cc_for_uniform}. Then
\begin{equation}
\label{eq:quad_norm_bound}
\|\mathcal{S}_q^{d,\lambda}\|_1 \;\leq\; \sum_{s=0}^{d-1} \binom{d-1}{s}\binom{q-1-s}{d-1},
\qquad
\|\mathcal{S}_q^{d,\nu}\|_1 \;\leq\; \|f_\nu\|_\infty\,\|\mathcal{S}_q^{d,\lambda}\|_1.
\end{equation}
In particular, the standard sparse grid asymptotic
$m = m(\ell+d, d) \simeq 2^\ell \ell^{d-1}/(d-1)!$ for fixed $d$ as
$\ell \to \infty$ \cite{bungartz2004sparse} yields $\ell = \mathcal{O}(\log m)$ and
\[
\|\mathcal{S}_q^{d,\lambda}\|_1,\; \|\mathcal{S}_q^{d,\nu}\|_1
\;=\; \mathcal{O}\bigl((\log m)^{d-1}\bigr)
\qquad (m \to \infty,\ d \text{ fixed}).
\]
\end{lemma}

\begin{proof}
The Lebesgue Clenshaw--Curtis weights are non-negative with
$\sum_{j=1}^{m_l} w_{j,l}^{(i),\lambda} = 1$ \cite{Imhof1963-lh}, so the
$\ell^1$-norm of each tensorized operator satisfies
\[
\|I_{\vk}^{d,\lambda}\|_1
\;=\; \sum_{j_1, \ldots, j_d} \prod_{i=1}^d w_{j_i, k_i}^{(i),\lambda}
\;=\; \prod_{i=1}^d \sum_{j_i} w_{j_i, k_i}^{(i),\lambda}
\;=\; 1.
\]
Substituting $s = q - |\vk|_1$ in \eqref{eq:smolyak} and applying the triangle
inequality,
\[
\|\mathcal{S}_q^{d,\lambda}\|_1
\;\leq\; \sum_{s=0}^{d-1} \binom{d-1}{s}
\sum_{\substack{\vk \in \N^d \\ |\vk| = q-s}}
\|I_{\vk}^{d,\lambda}\|_1
\;=\; \sum_{s=0}^{d-1} \binom{d-1}{s}\binom{q-1-s}{d-1},
\]
where the last equality uses the identity
$\#\{\vk \in \N_{\geq 1}^d : |\vk| = n\} = \binom{n-1}{d-1}$.
Since $\binom{q-1-s}{d-1}$ is non-increasing in $s$ and
$\sum_{s=0}^{d-1}\binom{d-1}{s} = 2^{d-1}$,
\[
\sum_{s=0}^{d-1}\binom{d-1}{s}\binom{q-1-s}{d-1}
\;\leq\; 2^{d-1}\binom{q-1}{d-1}
\;\leq\; \frac{2^{d-1}}{(d-1)!}\,(q-1)^{d-1}
\;=\; \mathcal{O}(q^{d-1}).
\]
The asymptotic $m \simeq 2^\ell \ell^{d-1}/(d-1)!$ yields
$\log m = \ell \log 2 + (d-1)\log\ell + \mathcal{O}(1)$, hence
$\ell, q = \mathcal{O}(\log m)$ and therefore
$\|\mathcal{S}_q^{d,\lambda}\|_1 = \mathcal{O}((\log m)^{d-1})$.

The $\nu$-bound follows from $|w_j^\nu| = |w_j^\lambda f_\nu(\xi_j)| \leq \|f_\nu\|_\infty\,|w_j^\lambda|$
applied termwise to the Smolyak sum.
\end{proof}
For the composed integrand $\qoi \circ T$ that the Smolyak rule will be applied to, we also need control of its mixed-regularity norm in terms of the mixed-regularity norm of $\qoi$ and the (isotropic) $C^k$-norms of the diagonal components $T_i$. The general isotropic composition bound of Theorem~\ref{thm:iso_composition} does not yield this, since composition with a general $C^k$-map does not preserve mixed regularity (cf.\ Proposition~\ref{prop:diagonal}). The following \emph{diagonal Faà di Bruno} bound makes the diagonal exception precise.
\begin{lemma}[Diagonal Faà di Bruno bound]
\label{lem:diag_fdb}
Let $d, k \in \N$, let $T \colon [0,1]^d \to [0,1]^d$ be a diagonal map, $T(x) = (T_1(x_1), \dots, T_d(x_d))$ with $T_i \in C^k([0,1])$, and let $\qoi \in C^k_{\mathrm{mix}}([0,1]^d; \R)$. Then $\qoi \circ T \in C^k_{\mathrm{mix}}([0,1]^d; \R)$ and there exists a constant $\bar c_{d,k} > 0$ depending only on $d$ and $k$ such that
\begin{equation}
\|\qoi \circ T\|_{C^k_{\mathrm{mix}}}
\;\leq\; \bar c_{d,k}\, \|\qoi\|_{C^k_{\mathrm{mix}}}\, \prod_{i=1}^d \bigl(1 + \|T_i\|_{C^k([0,1])}\bigr)^{k}.
\label{eq:diag_fdb}
\end{equation}
\end{lemma}
\begin{proof}
For any multi-index $\valpha \in \{0, \dots, k\}^d$, the chain rule applied coordinate-by-coordinate to the diagonal $T$ gives
\[
\partial^{\valpha}(\qoi \circ T)(x)
= \sum_{\vbeta \le \valpha} (\partial^{\vbeta} \qoi)(T(x))\, \prod_{i = 1}^d P_{\alpha_i, \beta_i}\!\bigl(T_i'(x_i), \dots, T_i^{(\alpha_i)}(x_i)\bigr),
\]
where each $P_{\alpha_i, \beta_i}$ is the (univariate) Bell polynomial of weight $\alpha_i$ in the derivatives of $T_i$ up to order $\alpha_i$, of total degree $\beta_i$ \cite[Corollary 12]{Ma2009-rj}. In particular, $\vbeta \le \valpha$ componentwise implies $|\vbeta|_{\mathrm{mix}} \le k$, so $(\partial^{\vbeta} \qoi) \in C^0$ is controlled by $\|\qoi\|_{C^k_{\mathrm{mix}}}$. The Bell polynomial $P_{\alpha_i, \beta_i}$ is a sum of finitely many monomials in $T_i', \dots, T_i^{(\alpha_i)}$ of total degree $\beta_i \le \alpha_i \le k$, hence bounded by $c_k (1 + \|T_i\|_{C^k})^{k}$ for a constant $c_k$ depending only on $k$. Taking the sup over $x$ and the max over $\valpha$ with $|\valpha|_{\mathrm{mix}} \le k$ yields \eqref{eq:diag_fdb} with $\bar c_{d,k}$ absorbing the combinatorial constants and the number of $\vbeta \le \valpha$.
\end{proof}
We are now in a position to combine all ingredients into the main PAC consistency result for the diagonal regime.
\begin{theorem}[PAC consistency of LtI in the diagonal regime]
\label{thm:pac_lti_diagonal}
Let Assumption~\ref{ass:diagonal_regime} hold with $k \geq 1$, and let $\qoi \in C^k_{\mathrm{mix}}([0,1]^d; \R)$. Let $\hat\Phi_n$ denote the empirical quantile transport from \eqref{eq:empirical_quantile_transport}, and let $(\xi_j, w_j)_{j=1}^m$ denote the Clenshaw--Curtis sparse grid rule for the product source $\nu$ from Section~\ref{subsec:sparse_grid}.
Then there exist constants $C_1, C_2 > 0$ depending only on $(d, k, \kappa, \mathcal{K}, M, \qoi)$ such that, for all $n, m \in \N$ and all $\delta \in (0, 1)$, with probability at least $1 - \delta$,
\begin{equation}
\begin{aligned}
\varepsilon^{\mathrm{tot}}_{n,m}(\qoi)
\;\leq\;
&\underbrace{C_1\, m^{-k} (\log m)^{(d-1)(k+1)}}_{\text{quadrature error}} \\
&+ \underbrace{C_2\, (\log m)^{d-1} \cdot n^{-1/2} \cdot \sqrt{\log(2d/\delta)}}_{\text{learning error}}.
\end{aligned}
\label{eq:pac_lti_diagonal}
\end{equation}
Equivalently, for all $\varepsilon, \delta \in (0, 1)$, there exist sample and budget thresholds
\[
n(\varepsilon, \delta, m) \;=\; \mathcal{O}\!\bigl(\varepsilon^{-2} (\log m)^{2(d-1)} \log(d/\delta)\bigr),
\ \
m(\varepsilon) \;=\; \mathcal{O}\!\bigl(\varepsilon^{-1/k} (\log(1/\varepsilon))^{(d-1)(k+1)/k}\bigr),
\]
such that $\varepsilon^{\mathrm{tot}}_{n,m}(\qoi) \leq \varepsilon$ holds for all $n \geq n(\varepsilon, \delta, m)$ and $m \geq m(\varepsilon)$ with probability at least $1 - \delta$.
\end{theorem}
\begin{proof}
By Lemma~\ref{lem:split}, applied to $\qoi \in C^1$ (which holds since $C^k_{\mathrm{mix}} \subset C^1$ for $k \geq 1$),
\[
\varepsilon^{\mathrm{tot}}_{n,m}(\qoi) \;\leq\; \varepsilon^{\mathrm{quad}} + d^2\,\|\qoi\|_{C^1} \cdot \|\hat\Phi_n - T\|_\infty \cdot \|Q_m\|_1.
\]
The Knothe--Rosenblatt transport $T$ is diagonal with components $T_i = F_{\mu_i}^{-1} \circ F_{\nu_i}$. By (D1) and (D2), both $f_{\nu_i}$ and $f_{\mu_i}$ lie in $C^k([0,1])$ and are bounded below by $\kappa$ on $[0,1]$, so $F_{\nu_i}, F_{\mu_i} \in C^{k+1}([0,1])$ are $C^{k+1}$-diffeomorphisms of $[0,1]$. Hence $T_i = F_{\mu_i}^{-1} \circ F_{\nu_i} \in C^{k+1}([0,1]) \subset C^k([0,1])$, with $C^k$-norm bounded in terms of $(k, \kappa, \|f_{\nu_i}\|_{C^k}, \|f_{\mu_i}\|_{C^k})$; see~\cite[Theorem 4.11]{marzouk2025}. Under Assumption~\ref{ass:diagonal_regime} we have $\|f_{\nu_i}\|_{C^k} \le M$ and $\|f_{\mu_i}\|_{C^k} \le M\kappa^{-(d-1)}$, so $\|T_i\|_{C^k}$ is bounded by a constant depending only on $(d, k, \kappa, M)$.

Since $\qoi \in C^k_{\mathrm{mix}}$ and $T$ is diagonal with $C^k$-components, Lemma~\ref{lem:diag_fdb} yields $\qoi \circ T \in C^k_{\mathrm{mix}}([0,1]^d; \R)$ with $\|\qoi \circ T\|_{C^k_{\mathrm{mix}}} \le \bar c_{d,k}\,\|\qoi\|_{C^k_{\mathrm{mix}}} \prod_i (1 + \|T_i\|_{C^k})^{k}$.
Theorem~\ref{thm:quadrature_error} applied with $\sigma = \mathrm{mix}$ then yields
\[
\varepsilon^{\mathrm{quad}} \;\leq\; C_1\, m^{-k} (\log m)^{(d-1)(k+1)},
\]
where $C_1$ absorbs $\|\qoi \circ T\|_{C^k_{\mathrm{mix}}}$.
For the statistical term, Lemma~\ref{lem:dkw} gives, with probability at least $1 - \delta$,
\[
\|\hat\Phi_n - T\|_\infty \;\leq\; \frac{1}{\kappa\sqrt{2 n}} \, \sqrt{\log(2d/\delta)},
\]
and Lemma~\ref{lem:quad_norm} gives $\|Q_m\|_1 \leq \tilde C \, (\log m)^{d-1}$ with $\tilde C$ depending only on $d$.
Combining these bounds yields \eqref{eq:pac_lti_diagonal} with $C_2 := d^2 \cdot \tilde C \cdot \|\qoi\|_{C^1} / (\kappa \sqrt{2})$.
The sample and budget complexities follow by inverting each error term in $\varepsilon/2$.
\end{proof}
\begin{remark}[Comparison with naive Monte Carlo]
\label{rmk:mc_vs_lti_diagonal}
The diagonal regime requires only $m(\varepsilon) = \mathcal{O}(\varepsilon^{-1/k} \cdot \mathrm{polylog})$ evaluations of $\qoi$, compared to $n_{\mathrm{MC}}(\varepsilon, \delta) = \mathcal{O}(\varepsilon^{-2})$ for naive Monte Carlo (Remark~\ref{rmk:mc_baseline}).
\end{remark}
\section{Numerical Illustration}
\label{sec:numerics}
In this section we numerically illustrate the consistency results
established in Sections~\ref{sec:general_regime}
and~\ref{sec:diagonal_regime}.
\subsection{One-Dimensional Comparison}
\label{subsec:numerics_genz_1d}
We begin with a one-dimensional illustration; cf. fig.~\ref{fig:genz_1d}. We compare the LtI scheme \eqref{eq:lti_estimator} with empirical quantile transport against naive Monte Carlo (Remark~\ref{rmk:mc_baseline}) on three Genz test integrands; \cite{Genz1984}. The oscillatory $\qoi_1 \in C^\infty$, the Gaussian peak $\qoi_4 \in C^\infty$, and the discontinuous $\qoi_6 \in L^1$. As target $\mu$ we use a uniform-plus-Gaussian-mixture density on $[0,1]$ in three configurations exhibiting different degrees of concentration. The closed-form reference values $\evdist{\mu}{\qoi}$ and the precise parametrizations of $\mu$ and the $\qoi_i$ are deferred to Appendix~\ref{app:genz_reference}.
Figure~\ref{fig:genz_1d} shows the median absolute integration error against the quadrature budget $m$ for sample sizes $n \in \{10^2, 10^3, 10^4, 10^5\}$. Empirical-quantile errors are reported as the median over $6$ independent runs of the empirical quantile transport; naive Monte Carlo errors are reported as the median over $80$ independent runs at each budget. The red curve labeled \emph{LtI (ideal $T$)} corresponds to the LtI scheme using the exact KR transport \eqref{eq:quantile_transport}.
The plots exhibit the two-stage error profile predicted by Theorem~\ref{thm:pac_lti_diagonal}. In the first stage, the quadrature error dominates and LtI tracks the ideal-transport curve at a rate substantially faster than the $1/\sqrt m$ slope of Monte Carlo. In the second stage, the learning error from \eqref{eq:dkw_pac} saturates at a sample-size-dependent plateau. Increasing $n$ pushes this plateau downward, consistent with the $1/\sqrt n$ rate of the DKW bound. For sufficiently large $n$, LtI outperforms naive Monte Carlo by several orders of magnitude across all tests. The advantage is most pronounced for the smooth integrands $\qoi_1$ and $\qoi_4$, which fall within the regularity hypotheses of Theorem~\ref{thm:pac_lti_diagonal}. The $\qoi_6$ lies outside the $C^k$ hypothesis of Theorem~\ref{thm:pac_lti_diagonal}, but its empirical advantage over MC might be explainable by Bounded Variation and Sobolev analogues of the sparse-grid theory \cite{bungartz2004sparse}.
\begin{figure}[ht]
\centering
\includegraphics[width=0.8\linewidth]{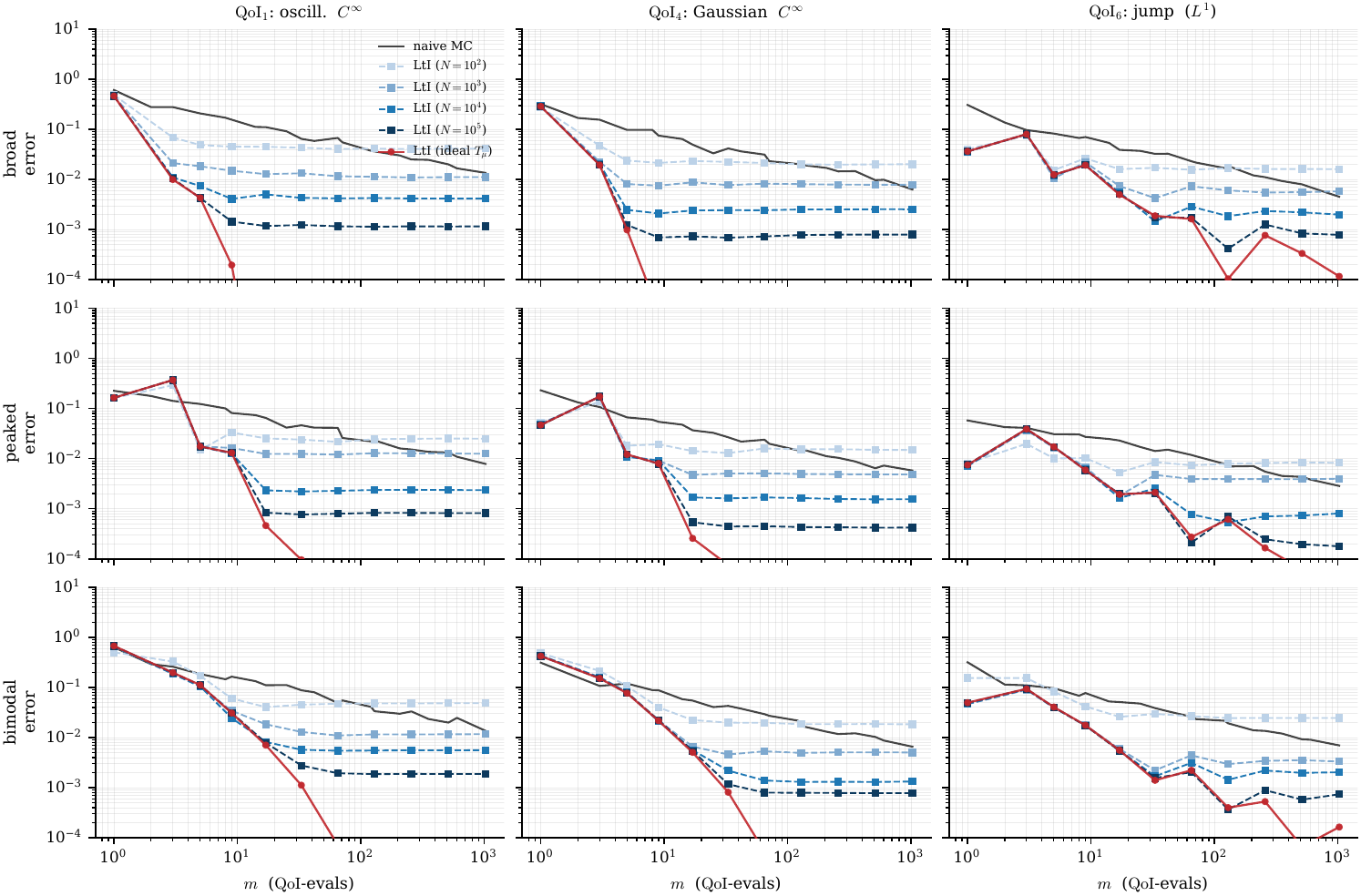}
\caption{One-dimensional illustration. Each row corresponds to a target distribution; each column to a Genz integrand. The horizontal axis is the quadrature budget $m$, the vertical axis is the median absolute integration error. The black curve is naive Monte Carlo at matching budget; blue curve are LtI with empirical quantile transport at different sample sizes; the red curve is LtI with the exact Knothe--Rosenblatt transport $T$ from~\eqref{eq:quantile_transport}.}
\label{fig:genz_1d}
\end{figure}
\subsection{Multi-Dimensional Integrands in the Diagonal Regime}
\label{subsec:numerics_genz_multid}
We now extend the illustration to higher dimensions $d \in \{2, 5, 10, 15\}$; cf. fig.~\ref{fig:genz_multid}. The target $\mu = \bigotimes_{j=1}^d \mu_j$ is the product of $d$ identical \emph{broad} marginals from Section~\ref{subsec:numerics_genz_1d}; see also \eqref{eq:mixture_marginal_density}. The empirical quantile transport $\hat\Phi_n$ is constructed coordinatewise as in~\eqref{eq:empirical_quantile_transport}.The integrands $\qoi_1$ and $\qoi_4$ are the canonical $d$-dimensional extensions of their 1D counterparts; see~\eqref{eq:fdef_dim} in Appendix~\ref{app:genz_reference} for the precise form and parameter scaling.
Figure~\ref{fig:genz_multid} shows the median integration error against the quadrature budget $m$. The qualitative picture of Section~\ref{subsec:numerics_genz_1d} persists. LtI outperforms Monte Carlo by several orders of magnitude as soon as $n$ is large enough to push the DKW plateau below the dominant quadrature error. The cross-over budget at which LtI overtakes naive Monte Carlo depends on both the dimension and the integrand. This is also consistent with the observation that sparse grid quadrature mitigates the curse of dimensionality only \emph{to some extent}; cf.\ \cite{bungartz2004sparse}. The asymptotic rate $m^{-k}$ improves on $m^{-1/2}$, but the cross-over depends on the problem and \emph{can} lie outside practical reach for higher dimensions.
\begin{figure}[ht]
\centering
\includegraphics[width=0.85\linewidth]{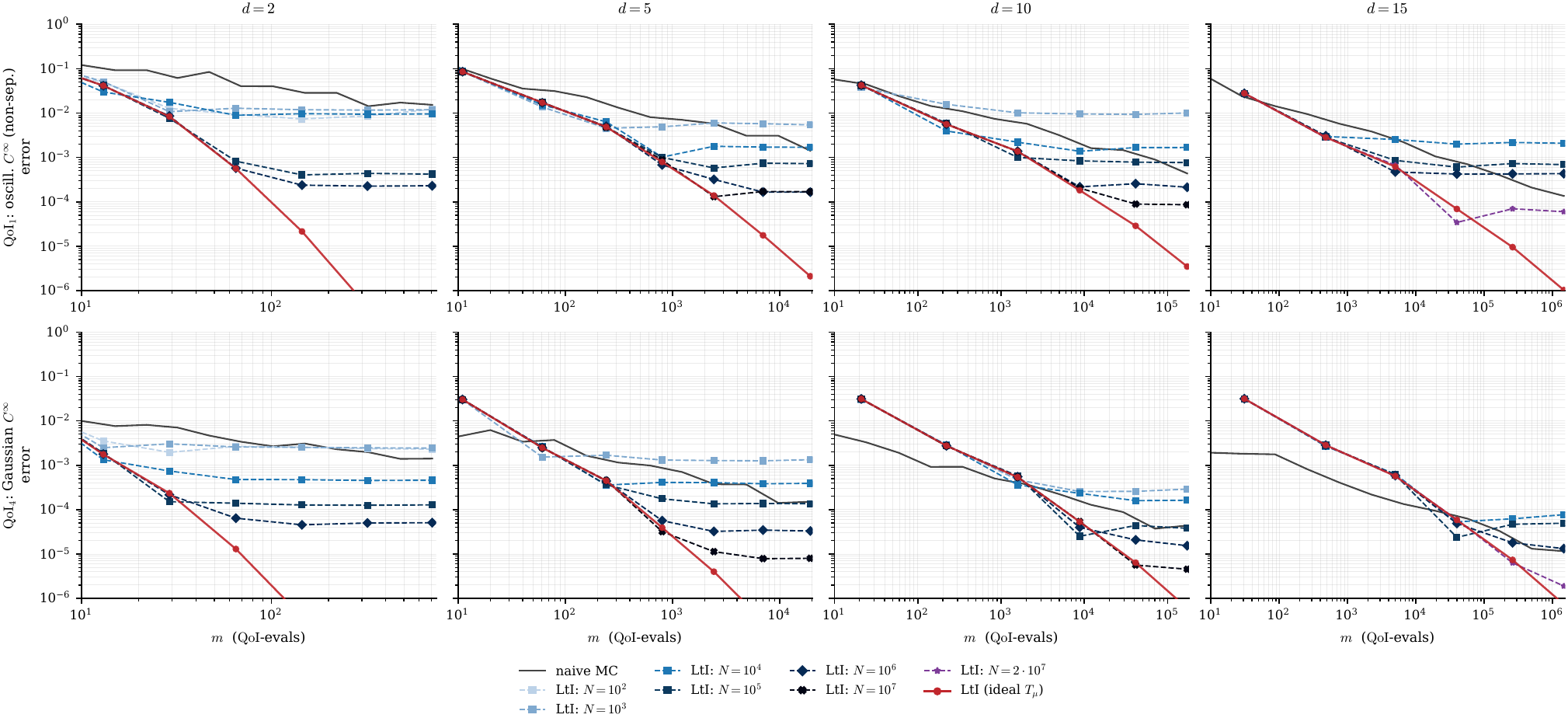}
\caption{Multi-dimensional illustration in the diagonal regime, $d \in \{2, 5, 10, 15\}$. Top row: oscillatory $\qoi_1 \in C^\infty$ (non-separable). Bottom row: Gaussian peak $\qoi_4 \in C^\infty$. Curves are colored as in Figure~\ref{fig:genz_1d}. Reference values are computed in closed form; see Appendix~\ref{app:genz_reference}.}
\label{fig:genz_multid}
\end{figure}

\subsection{End-to-End LtI on a Trained \texorpdfstring{$\mathrm{ReLU}^s$}{ReLUs} NeuralODE Flow}
\label{subsec:numerics_trained}
We conduct an end-to-end LtI experiment; cf. fig.~\ref{fig:trained_lti_2stage}. A $\mathrm{ReLU}^s$ neural ODE trained by maximum likelihood, then evaluated as a sparse-grid quadrature rule on the test integrands $\qoi_1, \qoi_4$. The target is a two-bump mixture on the diagonal of $[0,1]^2$ with uniform floor; see Appendix~\ref{app:training_setup_relus}. For each $s \in \{2, 3\}$ we train one flow, then push the Clenshaw--Curtis sparse-grid nodes through the time-1 flow and evaluate the LtI estimator.
\begin{figure}[ht]
\centering
\includegraphics[width=0.7\linewidth]{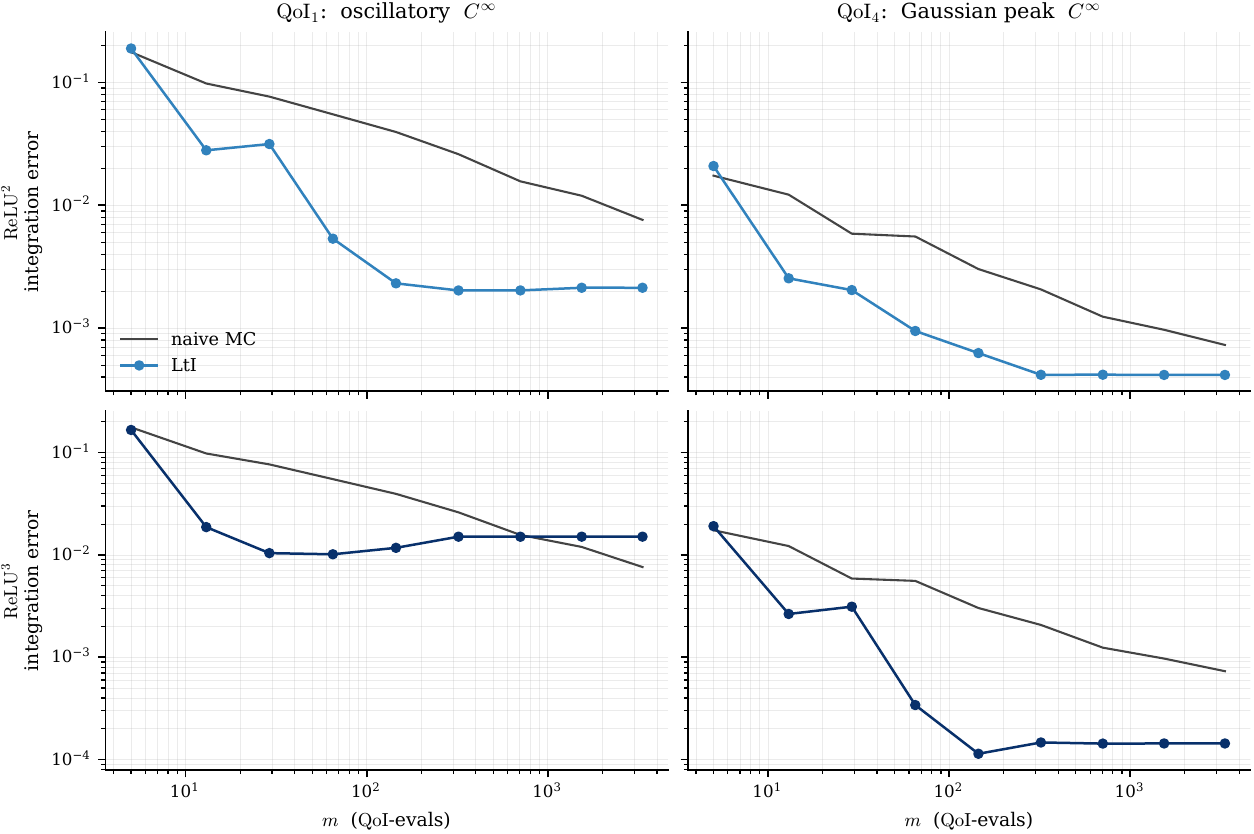}
\caption{LtI integration error on a $\mathrm{ReLU}^s$ neural ODE flow (blue) versus naive Monte Carlo at matching budget (grey), for activation orders $s \in \{2, 3\}$ (rows). All four panels exhibit the two-stage error profile predicted by Theorem~\ref{thm:pac_lti_general}.}
\label{fig:trained_lti_2stage}
\end{figure}
The plateau heights vary across panels because they depend on both the
(activation) regularity and the specific norms of $\qoi_k$ to which the trained
flow couples. In all cases the LtI scheme outperforms naive Monte Carlo
over the budget range. The quadrature term governs the early
regime, the learning (flow) term governs the late one. We caution that the
plateau reflects the practical difficulty of training $\mathrm{ReLU}^s$ vector fields. Throughout our experiments we observed that $\mathrm{ReLU}^s$-networks are harder to optimize as $s$ increases, in line e.g. with the findings of \cite{drygala2025learning}. The experiments
reported here already rely on several stabilization techniques to make
training tractable; see
Appendix~\ref{app:training_setup_relus} for details. Even so, we expect
better-tuned training pipelines or alternative architectures to lower the
plateau substantially. Moreover, we note that the activation regularity entering
Theorem~\ref{thm:pac_lti_general} is a worst-case bound on the
hypothesis class; characterizing the effective regularity of the specific
flow $\Phi^{\hat\theta_n}$ selected by maximum likelihood training is an
interesting direction, left for future work.

\subsection{Activation Regularity: \texorpdfstring{$\mathrm{ReLU}^s$}{ReLUs} Convergence Rates}
\label{subsec:numerics_relus}
We isolate the dependence of the Smolyak rate on the activation order $s$ on a controlled synthetic LtI experiment; cf. fig.~\ref{fig:synthetic_quadrature}. For $s \in \{2, 3, 5, 8\}$ and $32$ random seeds per $s$, we draw a one-hidden-layer $\mathrm{ReLU}^s$ vector field $v^\theta(x, t)$ on $[0,1]^2 \times [0,1]$ with the boundary cut-off $\chi_d$ from Section~\ref{subsec:neuralode}, integrate the time-1 flow $\Phi^\theta$ via RK4, and evaluate the LtI Smolyak quadrature error on the smooth test integrands $\qoi_1, \qoi_4$ composed with the flow. Full setup in Appendix~\ref{app:synthetic_quadrature_details}.
\begin{figure}[ht]
\centering
\includegraphics[width=0.85\linewidth]{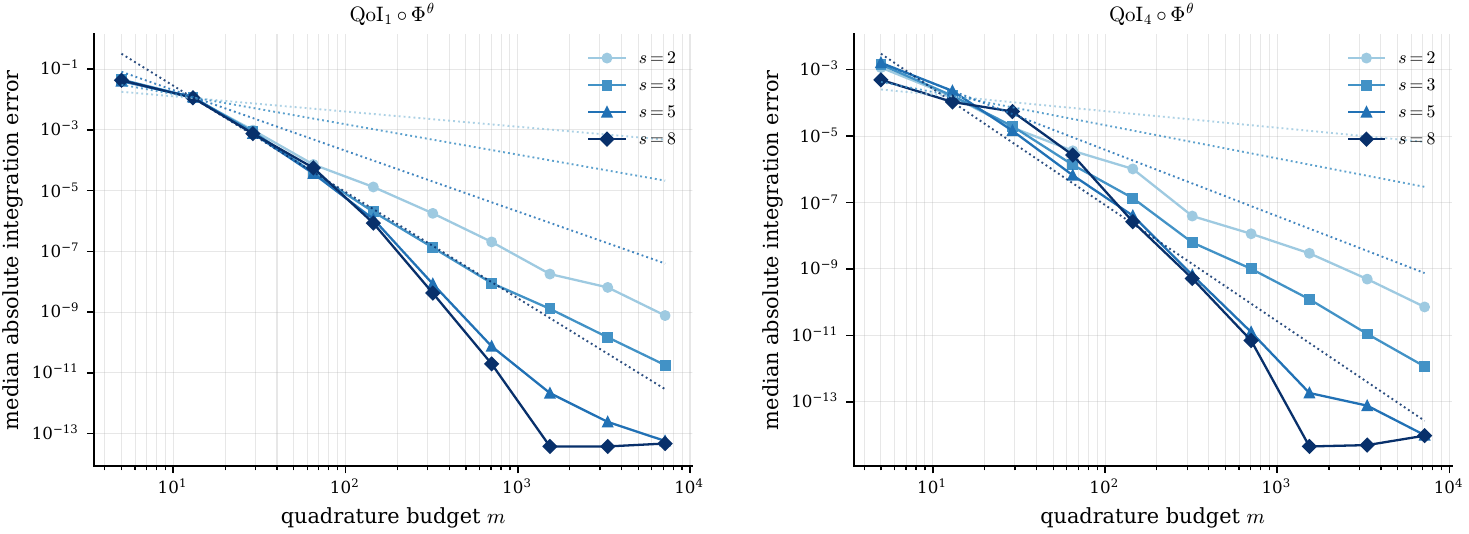}
\caption{Quadrature error of $\qoi_1 \circ \Phi^\theta$ over $32$ random $\mathrm{ReLU}^s$ neural-ODE flows per $s$. Dotted lines: rate $m^{-(s-1)/d}$ from Theorem~\ref{thm:pac_lti_general}. Empirical slopes are systematically steeper.}
\label{fig:synthetic_quadrature}
\end{figure}
Empirical slopes are systematically steeper than the rate predicted by Theorem~\ref{thm:pac_lti_general}. Any random $\mathrm{ReLU}^s$ network in fact carries higher Sobolev regularity than $C^{s-1}$, on which sparse-grid quadrature converges at the sharper effective rate $m^{-{s/d}}$, see~\cite{bungartz2004sparse}. A Sobolev-rate analysis of the LtI scheme reflecting this effect would tighten Theorem~\ref{thm:pac_lti_general}; we leave it as an open direction. The crossover at which higher $s$ overtakes lower $s$ is delayed, consistent with Remark~\ref{rem:constants}. 

\section{Discussion and Outlook}
\label{sec:Discussion_Outlook}
In this paper, we proved the consistency of the \emph{learning to integrate}
method on the $d$-dimensional unit cube in two structurally distinct regimes.
The split is not a matter of analytical convenience but is forced by a
rigidity statement of independent interest. Under composition with a
$C^1$-diffeomorphism, mixed regularity $C^k_{\mathrm{mix}}$ is preserved only
when the diffeomorphism is diagonal up to a permutation of coordinates
(Proposition~\ref{prop:diagonal}). The fast mixed-regularity sparse-grid rate
is therefore intrinsically unavailable for non-product targets, regardless of
the transport architecture, and the analysis splits accordingly.
In the \emph{general regime} of arbitrary targets, we combined the convergence
analysis for Clenshaw--Curtis sparse grids, universal approximation for
$\mathrm{ReLU}^s$-neural networks, and statistical learning theory for
neuralODE into a PAC-learning result at the isotropic rate. In the
\emph{diagonal regime} of product targets, we estimated each marginal CDF
empirically, recovering the full mixed-regularity rate via the
Dvoretzky--Kiefer--Wolfowitz inequality.

Despite these first results showing how the LtI method works in principle,
some open questions have to be left for future research. The setting
analyzed here differs in a few aspects from the numerical investigations
in \cite{ernst2025learning}, and bridging this gap remains a natural
direction. The illustrative experiments from Section~\ref{sec:numerics}
confirm the two-stage error profile predicted by
Theorems~\ref{thm:pac_lti_general} and~\ref{thm:pac_lti_diagonal}, but a
systematic benchmark in higher dimensions, across activation orders $s$,
and over different architectures remains future work.
A convergence analysis for sparse grid quadratures with respect to the normal
distribution has been conducted recently \cite{kazashi2025}. It would,
therefore, be of interest to extend our analysis to this case where
especially the approximation theory of neural networks has to be revised.
Also note that $\mathrm{ReLU}^s$-networks for $s > 2$ are not globally
Lipschitz, leading to questions concerning the existence of the
flow map $\Phi^\theta$ in this case.
The analysis we have given for the general regime is for neuralODE, while
other normalizing flow architectures show
good numerical properties as well \cite{chan2023lu, rochau2024new,ernst2025learning}. It would be of interest to repeat the
analysis given here for these architectures, as well as for flow matching
\cite{lipman2023flow} as an alternative training procedure that avoids the
cost of ODE integration during training.
Finally, the PAC bounds of Theorems~\ref{thm:pac_lti_general} and
\ref{thm:pac_lti_diagonal} assume $\qoi \in C^{s-1}$ and
$\qoi \in C^k_{\mathrm{mix}}$ respectively, whereas in many
uncertainty quantification applications the quantity of interest is itself
the output of a forward model with limited regularity. Quantifying how the
present rates degrade under reduced QoI smoothness, for instance by
combining the present analysis with bounded-variation or mixed-Sobolev
variants of the sparse-grid theory~\cite{bungartz2004sparse}, would broaden
the practical scope of LtI.

\paragraph{Acknowledgments}
The authors thank Oliver G.\ Ernst, Emily C.\ Erhardt, Toni Kowalewitz and Patrick Krüger for interesting discussions.
Emil Partow acknowledges support from the Munich Center for
Machine Learning (MCML).
Emil Partow acknowledges support from the Research Training
Group GRK~3081/1 (project number~534429653) of the Deutsche
Forschungsgemeinschaft (DFG, German Research Foundation).
The authors used large language models to polish written text for spelling, grammar, and style.
\bibliographystyle{siamplain}
\bibliography{slt-lti}
\newpage
\appendix
\section{Regularity Preserving Diffeomorphisms}
\label{app:diagonality_proof}
In this section we give a full proof to Proposition~\ref{prop:diagonal}, establishing that any $C^1$ diffeomorphism $\Phi \colon [0,1]^d \to [0,1]^d$ satisfying $f \circ \Phi \in C_{\mathrm{mix}}^k$ for all $f \in C_{\mathrm{mix}}^k$ is diagonal up to a permutation of coordinates.
\begin{lemma}\label{lem:bound}
Let $\Phi \colon [0,1]^d \to [0,1]^d$ be a $C^k$-diffeomorphism such
that $f \circ \Phi \in C^k_{\mathrm{mix}}([0,1]^d; \R)$ for every
$f \in C^k_{\mathrm{mix}}([0,1]^d; \R)$. Then there exists a constant
$C > 0$, depending only on $\Phi$, $k$, and $d$, such that
\[
\|f \circ \Phi\|_{C^k_{\mathrm{mix}}}
  \leq C \, \|f\|_{C^k_{\mathrm{mix}}}
  \quad\text{for all } f \in C^k_{\mathrm{mix}}([0,1]^d; \R).
\]
\end{lemma}
\begin{proof}
The composition operator
$T_\Phi \colon C^k_{\mathrm{mix}}([0,1]^d) \to C^k_{\mathrm{mix}}([0,1]^d)$,
$f \mapsto f \circ \Phi$, is linear, and $C^k_{\mathrm{mix}}([0,1]^d)$
is a Banach space under
$\|f\|_{C^k_{\mathrm{mix}}}
:= \max_{\valpha \in \{0, \dots, k\}^d} \|\partial^{\valpha} f\|_\infty$.
It suffices to verify that $T_\Phi$ has closed graph.
If $f_n \to f$ and $f_n \circ \Phi \to g$ in $C^k_{\mathrm{mix}}$,
then both convergences imply uniform convergence (since
$\|\cdot\|_\infty \leq \|\cdot\|_{C^k_{\mathrm{mix}}}$), so
$f_n \circ \Phi \to f \circ \Phi$ pointwise, hence $g = f \circ \Phi$.
The closed graph theorem yields the asserted bound.
\end{proof}
\begin{proof}[Proof of Proposition~\ref{prop:diagonal}]
We argue in dimension $d = 2$; the general case follows by selecting
arbitrary coordinate pairs as in Step~5 below.
\smallskip
\textit{Step 1: regularity of components.}
The coordinate projection $\pi_i(y) = y_i$ lies in
$C^k_{\mathrm{mix}}([0,1]^2; \R)$, with all mixed partial derivatives
of order $\geq 2$ vanishing identically. By hypothesis,
\[
\Phi_i = \pi_i \circ \Phi \in C^k_{\mathrm{mix}}([0,1]^2; \R)
\qquad \text{for } i \in \{1, 2\},
\]
so all mixed partial derivatives $\partial^{\vbeta} \Phi_i$ with
$\vbeta \in \{0, \dots, k\}^2$ exist and are bounded on $[0,1]^2$.
\smallskip
\textit{Step 2: norm bound.}
By Lemma~\ref{lem:bound}, there exists $C > 0$ such that
\begin{equation}
\label{eq:diag_proof_bound}
\|f \circ \Phi\|_{C^k_{\mathrm{mix}}}
\leq C\,\|f\|_{C^k_{\mathrm{mix}}}
\qquad \forall\, f \in C^k_{\mathrm{mix}}([0,1]^2; \R).
\end{equation}
For convenience, write $\partial_1 = \partial / \partial x_1$,
$\partial_2 = \partial / \partial x_2$.
\smallskip
\textit{Step 3: Faà di Bruno expansion.}
For $g \in C^{2k}([0,1]; \R)$ and the multi-index $\valpha = (k, k)$,
the multivariate Faà di Bruno formula \cite{Ma2009-rj} applied to the composition
$g \circ \Phi_1$ yields, using the regularity of $\Phi_1$ from Step~1,
\begin{equation}
\label{eq:diag_proof_fdb}
\partial_1^k \partial_2^k \big[ g \circ \Phi_1 \big](x)
= g^{(2k)}(\Phi_1(x))\,
  \big( \partial_1 \Phi_1(x) \big)^k \big( \partial_2 \Phi_1(x) \big)^k
+ R_g(x).
\end{equation}
The remainder $R_g$ collects all remaining terms with derivative order $\le 2k - 1$ on $g$. Grouping by the order of the outer derivative,
\[
R_g(x) = \sum_{\nu=1}^{2k-1} g^{(\nu)}(\Phi_1(x))\, \Psi_\nu(x),
\]
with each $\Psi_\nu$ a polynomial in the mixed derivatives
$\partial^{\vbeta} \Phi_1$ for $\vbeta \in \{0, \dots, k\}^2$ — in
particular bounded uniformly on $[0,1]^2$ by Step~1, with bound $M$
depending only on $\Phi$ and $k$.
\smallskip
\textit{Step 4: oscillating test functions.}
For $N \in \N$, set
\[
g_N^{\cos}(t) := N^{-k} \cos(N t),
\qquad
g_N^{\sin}(t) := N^{-k} \sin(N t).
\]
Both are $C^\infty$(hence in $C^{2k}([0,1])$, so the Faà di Bruno expansion
\eqref{eq:diag_proof_fdb} applies), with $q$-th derivative
$(g_N^{\cos})^{(q)}(t) = N^{q - k} \cos(N t + q \pi/2)$ and analogously
for $g_N^{\sin}$. Lifting to a function on $[0,1]^2$ via
$f_N^{\cos}(y) := g_N^{\cos}(y_1)$, the partial derivatives in
$y_2$-direction vanish identically, so
\[
\|f_N^{\cos}\|_{C^k_{\mathrm{mix}}}
= \|g_N^{\cos}\|_{C^k([0,1])}
= \max_{0 \leq q \leq k} N^{q - k}
\leq 1 \qquad \forall N \geq 1,
\]
and identically for $f_N^{\sin}$. By~\eqref{eq:diag_proof_bound},
\[
\big\| \partial_1^k \partial_2^k \big[ f_N^{\cos} \circ \Phi \big]
  \big\|_\infty
\leq \| f_N^{\cos} \circ \Phi \|_{C^k_{\mathrm{mix}}}
\leq C\,\| f_N^{\cos} \|_{C^k_{\mathrm{mix}}}
\leq C,
\]
and identically for $f_N^{\sin}$. Substituting $g = g_N^{\cos}$
into~\eqref{eq:diag_proof_fdb}, the leading term contributes
\[
(g_N^{\cos})^{(2k)}(\Phi_1(x))
  \big( \partial_1 \Phi_1(x) \big)^k \big( \partial_2 \Phi_1(x) \big)^k
= (-1)^k\, N^k\, \cos\big(N \Phi_1(x)\big)\, J(x),
\]
where we abbreviate $J(x) := (\partial_1 \Phi_1(x))^k\,
(\partial_2 \Phi_1(x))^k$. The remainder is bounded uniformly in $x$ and $N$, as
\[
| R_{g_N^{\cos}}(x) |
\leq \sum_{\nu=1}^{2k-1} N^{\nu - k}\,| \Psi_\nu(x) |
\leq M\,(2k - 1)\,N^{k - 1},
\]
since $\big| (g_N^{\cos})^{(\nu)}(t) \big| \leq N^{\nu - k}$ and the
largest exponent for $\nu \leq 2k - 1$ is $k - 1$.
Combining, we obtain pointwise on $[0,1]^2$:
\begin{align*}
\big| N^k\, J(x)\, \cos(N \Phi_1(x)) \big|
  &\leq C + M\,(2k - 1)\,N^{k - 1}, \\
\big| N^k\, J(x)\, \sin(N \Phi_1(x)) \big|
  &\leq C + M\,(2k - 1)\,N^{k - 1}.
\end{align*}
Squaring, adding, and using $\sin^2 + \cos^2 = 1$:
\[
N^{2k}\, J(x)^2
\leq 2 \big( C + M\,(2k - 1)\,N^{k - 1} \big)^2,
\]
which gives
\begin{equation}
\label{eq:diag_proof_J_bound}
J(x)^2 \leq 2\, \big( C\,N^{-k} + M\,(2k - 1)\,N^{-1} \big)^2.
\end{equation}
\smallskip
\textit{Step 5: vanishing of $J$ and conclusion.}
The right-hand side of~\eqref{eq:diag_proof_J_bound} tends to zero as
$N \to \infty$, while $J$ is independent of $N$. Hence $J \equiv 0$
on $[0,1]^2$, and since $k \geq 1$ this implies
\begin{equation}
\label{eq:diag_proof_row1}
(\partial_1 \Phi_1)(\partial_2 \Phi_1) \equiv 0
\qquad \text{on } [0,1]^2.
\end{equation}
Repeating Steps~3-4 with the test functions $g_N^{\cos}, g_N^{\sin}$
applied in the variable $y_2$ instead of $y_1$ yields analogously
\begin{equation}
\label{eq:diag_proof_row2}
(\partial_1 \Phi_2)(\partial_2 \Phi_2) \equiv 0
\qquad \text{on } [0,1]^2.
\end{equation}
By \eqref{eq:diag_proof_row1}-\eqref{eq:diag_proof_row2}, each row of
$D\Phi(x)$ contains at most one nonzero entry. Since $\Phi$ is a
diffeomorphism, $\det D\Phi(x) \neq 0$ for every $x \in [0,1]^2$, so
each row contains \emph{exactly} one nonzero entry. The set of points at which
$\partial_j \Phi_i$ is nonzero is open by continuity, and the disjoint
union over $j$ covers $[0,1]^2$, which is connected; hence the column
index of the nonzero entry is constant on $[0,1]^2$ for each row. The
resulting nonzero pattern is the support of a permutation matrix on
$\{1, 2\}$, i.e.\ either the identity (\emph{diagonal}: $\partial_2 \Phi_1
\equiv 0$ and $\partial_1 \Phi_2 \equiv 0$) or the transposition
(\emph{anti-diagonal}: $\partial_1 \Phi_1 \equiv 0$ and $\partial_2 \Phi_2
\equiv 0$). These two cases correspond exactly to the two elements of
the symmetric group $S_2$.
For arbitrary $d \geq 2$, the same argument applied to each pair
of coordinate indices $(\ell, r)$ with $\ell \neq r$ and to each
component $\Phi_m$ yields
$(\partial_\ell \Phi_m)(\partial_r \Phi_m) \equiv 0$ on $[0,1]^d$.
Hence each row of $D\Phi$ has at most one nonzero entry, and the
permutation structure follows from invertibility of $D\Phi$ and
connectedness of $[0,1]^d$.
\end{proof}
\section{Statistical Learning Theory for NeuralODE}
\label{app:relus}
In this appendix we collect the statistical learning machinery
underlying the learning-error bound of Theorem~\ref{thm:pac_learning_error_2}
in the general regime. The construction follows the framework of
Marzouk, Ren, Wang, and Zech~\cite{marzouk2023}, with a hypothesis space built
from $\mathrm{ReLU}^s$-networks.
Let $d \ge 1$ and let $\nu, \mu \in \mathcal M_1^+([0,1]^d)$ be probability measures on $[0,1]^d$ with
densities $f_\nu, f_\mu$ w.r.t.\ the Lebesgue measure.  The estimator is the
time-$1$ flow map $\Phi^\theta \colon [0,1]^d \to [0,1]^d$ of a neural
ODE whose vector field is a $\mathrm{ReLU}^s$-network with $s \ge 2$ and
$\eta_s(y) = \max(y, 0)^s \in C^{s-1}(\R)$.  For a measurable
$T \colon [0,1]^d \to [0,1]^d$ we write $T_*\nu$ for the push-forward.  The
Hellinger and total-variation distances are defined as
\[
\mathcal{H}^2(p,q) = \tfrac12 \int (\sqrt p - \sqrt q)^2\,\d x, \qquad
\TV(p,q) = \int |p-q|\,\d x,
\]
and satisfy $\TV(p,q) \le 2\sqrt 2\,\mathcal{H}(p,q)$ by Cauchy--Schwarz and the identity $(p-q) = (\sqrt{p} - \sqrt{q})(\sqrt{p} + \sqrt{q})$.
\subsection{Hypothesis Space and MLE}
\label{sec:companion_class}
Fix $s \ge 2$ and set $\eta_s(y) = \max(y, 0)^s \in C^{s-1}(\R)$, applied
coordinate-wise to vectors.  Following~\cite[Def.~4.1]{marzouk2023},
for integers $d_1, d_2, L, W, S \in \N$ and $B \geq 1$ let
$\Phi_s^{d_1, d_2}(L, W, S, B)$ denote the class of fully connected
networks $f_{\mathrm{NN}} \colon [0,1]^{d_1} \to \R^{d_2}$ of the form
\[
f_{\mathrm{NN}}(x)
= W^{(L)} \eta_s\!\bigl(W^{(L-1)} \eta_s(\cdots
\eta_s(W^{(1)} x + b^{(1)}) \cdots) + b^{(L-1)}\bigr) + b^{(L)},
\]
with depth $L$ (number of affine maps), weight matrices
$W^{(\ell)} \in \R^{d_\ell \times d_{\ell-1}}$ satisfying
$\max_\ell d_\ell \le W$, with at most $S$ non-zero entries across all
$(W^{(\ell)}, b^{(\ell)})$, each of absolute value at most $B$.  For
$1 \le \ell \le L$ we write $F_\ell$ for the network composed of the
first $\ell$ affine maps, so that $F_1(x) = W^{(1)} x + b^{(1)}$ and
$F_{\ell+1}(x) = W^{(\ell+1)} \eta_s(F_\ell(x)) + b^{(\ell+1)}$; the
output dimension of $F_\ell$ is the layer width $d_\ell \leq W$,
not necessarily $1$. We call such an $F_\ell$ an \emph{$\ell$-$\mathrm{ReLU}^s$ subnetwork}; throughout the appendix, all matrix are sup-norms and operator norms induced by the corresponding operator norm convention.
To enforce the no-flux boundary condition needed for the flow
$\Phi^\theta$ to preserve $[0,1]^d$, we follow~\cite[Def.~4.7]{marzouk2023}
and introduce the admissible class
\[
V := \bigl\{f \in C^1([0,1]^{d+1})\mid f_i(x, t) = 0 \ \forall i = 1, \dots, d \text{ and } (x, t) \in [0,1]^{d+1}; \, x_i \in \{0, 1\}\bigr\},
\]
together with the cut-off
$\chi_d(x) = \bigl(x_1(1 - x_1), \dots, x_d(1 - x_d)\bigr)^{\!\top}$ and the
$\mathrm{ReLU}^s$-ansatz
\begin{equation*}
\mathcal{F}^{\mathrm{ansatz}}_s(L, W, S, B)
:= \bigl\{(x, t) \mapsto f_{\mathrm{NN}}(x, t) \odot \chi_d(x) :
f_{\mathrm{NN}} \in \Phi_s^{d+1, d}(L, W, S, B)\bigr\},
\label{eq:companion_ansatz}
\end{equation*}
where $\odot$ denotes coordinate-wise multiplication.  By construction,
$\mathcal{F}^{\mathrm{ansatz}}_s \subset V$ and every element lies in
$C^{s-1}(\Omega, \R^d)$ with $\Omega := [0,1]^d \times [0,1]$.
For a $C^{s-1}$-truncation radius $r > 0$ we then define
\begin{equation*}
\mathcal{F}_s^{L, W, S, B, r}
:= \mathcal{F}^{\mathrm{ansatz}}_s(L, W, S, B) \cap
\bigl\{f \in C^{s-1}(\Omega) \;:\; \|f\|_{C^{s-1}(\Omega)} \le r,\ \|f\|_{W^{2,\infty}(\Omega)} \le r\bigr\}.
\label{eq:companion_hypothesis}
\end{equation*}
For $s \geq 3$, the trivial inclusion $C^{s-1}(\Omega) \subset C^2(\Omega) \subset W^{2,\infty}(\Omega)$ on the bounded Lipschitz domain $\Omega$ makes the $W^{2,\infty}$-constraint redundant; for $s = 2$ ($\mathrm{ReQU}$), every element of $\mathcal{F}^{\mathrm{ansatz}}_2(L, W, S, B)$ is piecewise polynomial in $x$ of degree depending on $L$ with Hessian bounded in terms of $(L, W, S, B)$ on compact domains, so the $W^{2,\infty}$-radius $r$ is a genuine additional truncation that is satisfiable for the approximation construction of Theorem~\ref{thm:pac_learning_error_2} with $r = \mathcal{O}(1)$. In both cases, \cite[Definition 4.7]{marzouk2023} is satisfied for every $s \ge 2$.
The estimator is an empirical MLE, given by
\[
\hat\theta_n \in \arg\min_{\theta \in [-B, B]^S} \hat L_n(\theta, \chi_n) = \arg\min_{\theta \in [-B, B]^S}-\frac{1}{n} \sum_{j=1}^n \log f_{\Phi^\theta_*\nu}(X_j),
\]
where $\theta \in [-B,B]^S$ enumerates the non-zero parameters of a fixed sparsity pattern in $\Phi_s^{d+1,d}(L,W,S,B)$, and the minimization is taken over all such patterns; cf.\ Theorem~\ref{thm:relu_s_entropy} below.
\subsection{Approximation and Metric Entropy for \texorpdfstring{$\mathrm{ReLU}^s$}{ReLU\textasciicircum s}}
\label{sec:companion_approx_entropy}
\subsubsection{Approximation Theory}
Marzouk, Ren, Wang, Zech state a universal approximation result for $\mathrm{ReLU}^{s}$-networks, based on spline constructions, cf.\ \cite{marzouk2023} that is already in the generality we need.
\begin{theorem}[$\mathrm{ReLU}^s$-approximation; {\cite[Cor.~4.5]{marzouk2023}}, re-indexed]
\label{thm:relu_s_approx}
Let $k, d_1, d_2$ and $s \in \N$ with $s \ge 2$ and $k \ge 1$.  There exists a
constant $C = C(d_1, d_2, k, s) > 0$ such that for every $f \in
C^k([0,1]^{d_1}; \R^{d_2})$ and every $N \in \N$ there is a network
$\tilde f \in \Phi_s^{d_1, d_2}(L, W, S, B)$ with $L \le C$, $W \le N$, $S \le
N$, $B \le C(\|f\|_{C^k} + N^{1/d_1})$, $\tilde f \in C^{s-1}$, and
\[
\|f_j - \tilde f_j\|_{W^{r,\infty}([0,1]^{d_1})}
\;\le\; C\,N^{-(k-r)/d_1}\,|f_j|_{C^k}
\qquad \forall\,r \in \{0, \dots, k\},\ j = 1, \dots, d_2.
\]
\end{theorem}
In Stage~1 of the main PAC bound, Theorem~\ref{thm:pac_learning_error_2}, we apply
Theorem~\ref{thm:relu_s_approx} with $r = 1$, $d_1 = d+1$, $d_2 = d$ to
the target velocity field $f^\mu \in C^k([0,1]^{d+1}, \R^d)$ corresponding to a flow $T^\mu$ with $T^\mu_*\nu = \mu$. On the bounded Lipschitz domain $[0,1]^{d_1}$, the Sobolev space $W^{r,\infty}$ is, for integer $r$, equivalent to the H\"older space $C^{r-1,1}$ and in particular controls the $C^r$-norm; we therefore freely interchange $W^{r,\infty}$ and $C^r$ bounds in the sequel up to dimension-only constants.
\subsubsection{Metric Entropy}
\label{rmk:universal_approx_generalization}
Although the approximation theory in \cite{marzouk2023} is stated for general $\mathrm{ReLU}^s$-networks, $s \geq 2$, the main statistical convergence result~\cite[Theorem 4.8]{marzouk2023} is only stated for the case $s = 2$ of $\mathrm{ReQU}$ networks.
We are however interested in a generalization
to the case of $\mathrm{ReLU}^s$ for general $s \geq 2$, since leveraging the smoothness of the underlying neural network architecture corresponds to leveraging the smoothness of resulting flows $\hat\Phi$, which will ultimately result in better convergence rates for numerical integration of the composed integrand $\qoi \circ \hat\Phi$, by Theorem~\ref{thm:quadrature_error}.
It turns out that it suffices to generalize the \emph{metric-entropy step} in the proof of \cite[Theorem 4.8]{marzouk2023} since the authors provide a general convergence result for ODE-MLEs, that covers the rest of the argument, cf.\ \cite[Theorem 2.2]{marzouk2023}.
The \emph{metric entropy step} is based on four auxiliary lemmas
(\cite{marzouk2023}~C.1--C.4); we repeat the same four steps with the appropriate
$s$-dependent constants.  Throughout, set
\[
M := W \vee B \vee d_1 \vee 1, \qquad
\beta_\ell := \frac{2(s^\ell - 1)}{s - 1}, \quad \ell = 1, \dots, L.
\]
We
record the elementary bounds, used repeatedly below,
\begin{gather}
\eta_s(y) \le |y|^s, \qquad
|\eta_s(y) - \eta_s(\tilde y)| \le s \max(|y|, |\tilde y|)^{s-1} |y - \tilde y|,
\label{eq:etas_lip}\\
|\eta_s'(y)| \le s |y|^{s-1}, \qquad
|\eta_s'(y) - \eta_s'(\tilde y)| \le s(s-1) \max(|y|, |\tilde y|)^{s-2} |y - \tilde y|.
\label{eq:etas_prime}
\end{gather}
\begin{lemma}[Uniform boundedness; cf.\ \cite{marzouk2023}~Lemma~C.1]
\label{lem:uniform_bd}
Let $s \ge 2$ and $L \ge 1$.  There exists a constant $C_L^{(s)} > 0$,
depending only on $L, s$, such that for every $1 \le \ell \le L$ and every
$F_\ell \in \Phi_{s, \ell}^{d_1, 1}(L, W, S, B)$,
\begin{equation}
\sup_{x \in [0,1]^{d_1}} \|F_\ell(x)\|_\infty
\;\le\; C_L^{(s)}\,M^{\beta_\ell}.
\label{eq:layer_bound}
\end{equation}
\end{lemma}
\begin{proof}
Set $a_\ell := \max(\sup_{x \in [0,1]^{d_1}} \|F_\ell(x)\|_\infty, 1)$ and
use $\eta_s(y) \le |y|^s$.  For $\ell = 1$, with $|x_j| \le 1$, entries of
$W^{(1)}$ bounded by $B$ and at most $d_1$ non-zero entries per row,
\[
\sup_x \|F_1(x)\|_\infty \le d_1 B + B \le 2 M^2,
\]
hence $a_1 \le 2 M^2$.  For $\ell \ge 1$, each row of $W^{(\ell+1)}$ has
at most $W$ non-zero entries bounded by $B$, so
\[
\sup_x \|F_{\ell+1}(x)\|_\infty
\;\le\; W B\,\|\eta_s(F_\ell)\|_\infty + B
\;\le\; M^2 a_\ell^s + M
\;\le\; 2 M^2 a_\ell^s,
\]
using $a_\ell, M \ge 1$ in the last step.  Taking logs and iterating the
affine recursion $\log a_{\ell+1} \le \log(2 M^2) + s \log a_\ell$ from the
base $\log a_1 \le \log(2 M^2)$ gives
\[
\log a_\ell
\;\le\; \log(2 M^2) \sum_{j=0}^{\ell-1} s^j
\;=\; \log(2 M^2)\,\frac{s^\ell - 1}{s - 1},
\]
whence $a_\ell \le (2 M^2)^{(s^\ell - 1)/(s-1)} = 2^{(s^\ell - 1)/(s-1)} M^{\beta_\ell}$,
which proves~\eqref{eq:layer_bound} with
$C_L^{(s)} := 2^{(s^L - 1)/(s-1)}$.
\end{proof}
\begin{lemma}[Uniform gradient bound; cf.\ \cite{marzouk2023}~Lemma~C.3]
\label{lem:uni_grad_bd}
Under the hypotheses of Lemma~\ref{lem:uniform_bd}, there exists a
constant $\tilde C_L^{(s)} > 0$, depending only on $L, s$, such that for
every $1 \le \ell \le L$ and every $F_\ell \in \Phi_{s, \ell}^{d_1, 1}(L, W, S, B)$,
\begin{equation}
\sup_{x \in [0,1]^{d_1}} \|\nabla F_\ell(x)\|_\infty
\;\le\; \tilde C_L^{(s)}\,M^{\beta_\ell}.
\label{eq:grad_bound}
\end{equation}
Here, $\|\nabla F_\ell(x)\|_\infty$ is interpreted as the operator $\infty$-norm $\max_i \sum_j |(\nabla F_\ell(x))_{ij}|$ for the inner-layer Jacobians, reducing to the vector sup-norm at the output layer where $d_2 = 1$.
\end{lemma}
\begin{proof}
For $\ell = 1$, $\nabla F_1 = W^{(1)}$ and
$\|\nabla F_1\|_\infty \le d_1 B \le M^2 = M^{\beta_1}$.  For $\ell \ge 2$,
the chain rule gives
\[
\nabla F_\ell(x)
= W^{(\ell)}\,\mathrm{diag}\bigl(\eta_s'(F_{\ell-1}(x))\bigr)\,\nabla F_{\ell-1}(x),
\]
and with $|\eta_s'(y)| \le s |y|^{s-1}$ from~\eqref{eq:etas_prime} together
with Lemma~\ref{lem:uniform_bd},
\begin{align*}
\|\nabla F_\ell(x)\|_\infty
&\;\le\; W B \cdot s \|F_{\ell-1}\|_\infty^{s-1} \|\nabla F_{\ell-1}(x)\|_\infty \\
&\;\le\; s M^2 \bigl(C_L^{(s)}\bigr)^{s-1} M^{(s-1)\beta_{\ell-1}}
\|\nabla F_{\ell-1}(x)\|_\infty.
\end{align*}
Iterating and telescoping exponents along
$\beta_\ell = 2 + s \beta_{\ell-1} = 2 + (s-1)\beta_{\ell-1} + \beta_{\ell-1}$ yields
\eqref{eq:grad_bound} with $\tilde C_L^{(s)} := s^L (C_L^{(s)})^{(s-1)L}$.
\end{proof}
\begin{lemma}[Parameter-to-function Lipschitz bound; cf.\ \cite{marzouk2023}~Lemmas~C.2 \& C.4]
\label{lem:param_lip}
Let $s \ge 2$ and $L = \mathcal{O}(1)$.  There exist an exponent $P_{L, s}
= \mathcal{O}(s^L)$ and a constant $A_L^{(s)} > 0$, depending only on $L, s$,
such that for any two networks
$F, \tilde F \in \Phi_s^{d_1, 1}(L, W, S, B)$ sharing the same sparsity
pattern and differing in parameter $\ell_\infty$-norm by at most
$\tau' \in (0, 1]$,
\begin{equation}
\|F - \tilde F\|_{C^1([0,1]^{d_1})}
\;\le\; A_L^{(s)}\,M^{P_{L, s}}\,\tau'.
\label{eq:param_lip}
\end{equation}
\end{lemma}
\begin{proof}
We unroll the proofs of~\cite[Lem.~C.2 \& C.4]{marzouk2023} layer by
layer, using the general activation bounds
\eqref{eq:etas_lip}--\eqref{eq:etas_prime} in place of the $s = 2$
estimates.  Throughout, set $M := W \vee B \vee d_1 \vee 1$ as in
Lemma~\ref{lem:uniform_bd}, and
\[
\Delta_\ell^{(0)}(x) := \|F_\ell(x) - \tilde F_\ell(x)\|_\infty,
\qquad
\Delta_\ell^{(1)}(x) := \|\nabla F_\ell(x) - \nabla \tilde F_\ell(x)\|_\infty.
\]
We prove by \emph{joint} induction on $\ell = 1, \dots, L$ that there
exist exponents $p_\ell$ with $p_1 = 1$ and $p_{\ell+1} = p_\ell + \beta_\ell(s+1)$, and constants $a_\ell > 0$
(depending only on $L, s$) with
\begin{equation}
\sup_{x \in [0,1]^{d_1}} \bigl(\Delta_\ell^{(0)}(x) + \Delta_\ell^{(1)}(x)\bigr)
\;\le\; a_\ell\,M^{p_\ell}\,\tau'.
\label{eq:joint_induction}
\end{equation}
This implies~\eqref{eq:param_lip} at level $\ell = L$.
\emph{Base case $\ell = 1$.}
$F_1 - \tilde F_1 = (W^{(1)} - \tilde W^{(1)}) x + (b^{(1)} - \tilde b^{(1)})$ and
$\nabla F_1 - \nabla \tilde F_1 = W^{(1)} - \tilde W^{(1)}$, so
$\Delta_1^{(0)} \le (d_1 + 1) \tau'$ and
$\Delta_1^{(1)} \le d_1 \tau'$.
\emph{Inductive step.}  Assume~\eqref{eq:joint_induction} at level
$\ell$.  Writing
$F_{\ell+1} = W^{(\ell+1)} \eta_s(F_\ell) + b^{(\ell+1)}$ for both networks
and subtracting,
\begin{equation*}
F_{\ell+1} - \tilde F_{\ell+1}
= W^{(\ell+1)} \bigl(\eta_s(F_\ell) - \eta_s(\tilde F_\ell)\bigr)
+ \bigl(W^{(\ell+1)} - \tilde W^{(\ell+1)}\bigr) \eta_s(\tilde F_\ell)
+ \bigl(b^{(\ell+1)} - \tilde b^{(\ell+1)}\bigr).
\label{eq:F_recursion}
\end{equation*}
The first summand is bounded by $W B \cdot s
\max(\|F_\ell\|_\infty, \|\tilde F_\ell\|_\infty)^{s-1}\,
\Delta_\ell^{(0)}$ via~\eqref{eq:etas_lip} and by
Lemma~\ref{lem:uniform_bd} hence by
$s\,M^2 (C_L^{(s)})^{s-1} M^{(s-1)\beta_\ell} \Delta_\ell^{(0)}$.  The
second summand is bounded by $W \tau' \|\eta_s(\tilde F_\ell)\|_\infty
\le W \tau' (C_L^{(s)})^s M^{s \beta_\ell}$, and the third by $\tau'$.
Adding the three contributions and invoking the induction hypothesis
on $\Delta_\ell^{(0)}$ yields
\begin{equation}
\Delta_{\ell+1}^{(0)}
\;\le\; a_\ell'\,M^{p_\ell + \beta_\ell(s+1)}\,\tau'
\label{eq:Delta0_recursion}
\end{equation}
for some $a_\ell' = \mathcal{O}_{L, s}(1)$.  Differentiating~\eqref{eq:F_recursion}
gives
\begin{align*}
\nabla F_{\ell+1} - \nabla \tilde F_{\ell+1}
&= W^{(\ell+1)} \bigl[\mathrm{diag}(\eta_s'(F_\ell)) \nabla F_\ell
- \mathrm{diag}(\eta_s'(\tilde F_\ell)) \nabla \tilde F_\ell\bigr] \\
&+ (W^{(\ell+1)} - \tilde W^{(\ell+1)}) \mathrm{diag}(\eta_s'(\tilde F_\ell))
\nabla \tilde F_\ell.
\end{align*}
Using the splitting
\begin{align*}
\eta_s'(F_\ell) \nabla F_\ell - \eta_s'(\tilde F_\ell) \nabla \tilde F_\ell
&= \eta_s'(F_\ell)(\nabla F_\ell - \nabla \tilde F_\ell)\\
&\quad + (\eta_s'(F_\ell) - \eta_s'(\tilde F_\ell)) \nabla \tilde F_\ell,
\end{align*}
we bound the first term by
$s\,\|F_\ell\|_\infty^{s-1} \Delta_\ell^{(1)}
\le s (C_L^{(s)})^{s-1} M^{(s-1)\beta_\ell}\,\Delta_\ell^{(1)}$
via~\eqref{eq:etas_prime} and Lemma~\ref{lem:uniform_bd}, and the
second by $s(s-1) \max(\|F_\ell\|, \|\tilde F_\ell\|)^{s-2}
\Delta_\ell^{(0)}\,\|\nabla \tilde F_\ell\|_\infty
\le s(s-1) (C_L^{(s)})^{s-2} \tilde C_L^{(s)}\,
M^{(s-2)\beta_\ell + \beta_\ell}\,\Delta_\ell^{(0)}$
via~\eqref{eq:etas_prime}, Lemma~\ref{lem:uniform_bd} and
Lemma~\ref{lem:uni_grad_bd}.  Multiplying by $\|W^{(\ell+1)}\| \le W B$
and adding the second term
$\|W^{(\ell+1)} - \tilde W^{(\ell+1)}\| \cdot s \|\tilde F_\ell\|^{s-1}
\|\nabla \tilde F_\ell\| \le W \tau' \cdot s (C_L^{(s)})^{s-1}
\tilde C_L^{(s)} M^{(s-1)\beta_\ell + \beta_\ell}$, we obtain via the
induction hypothesis on $\Delta_\ell^{(0)} + \Delta_\ell^{(1)}$
\[
\Delta_{\ell+1}^{(1)}
\;\le\; a_\ell''\,M^{p_\ell + \beta_\ell(s+1)}\,\tau'.
\]
Combining this with~\eqref{eq:Delta0_recursion} gives
\eqref{eq:joint_induction} at level $\ell + 1$ with
$a_{\ell+1} := a_\ell' + a_\ell''$ and $p_{\ell+1} := p_\ell + \beta_\ell(s+1)$. Since
$\beta_\ell = 2(s^\ell - 1)/(s-1) = \mathcal{O}(s^\ell)$, summing the recursion yields
$p_L = p_1 + \sum_{\ell=1}^{L-1} \beta_\ell(s+1) = \mathcal{O}(s^L)$, completing the induction and the proof.
\end{proof}
With the three lemmas in place, the $C^1$-metric entropy of the
network class follows by the standard covering-number argument of
\cite[Thm.~4.2]{marzouk2023}.
\begin{theorem}[$C^1$-metric entropy of $\mathrm{ReLU}^s$-networks; cf. {\cite[Thm.~4.2]{marzouk2023}}]
\label{thm:relu_s_entropy}
Let $s \ge 2$ be fixed and $d_1, d_2 \ge 1$.  Consider
$\Phi_s^{d_1, d_2}(L, W, S, B)$ with $L = \mathcal{O}(1)$, $W = \mathcal{O}(N)$, $S = \mathcal{O}(N)$,
$B = \mathcal{O}(N)$.  Then there exists a constant $c_{L, s, d_1, d_2} > 0$ depending only on $L, s, d_1, d_2$, such that
for every $\tau \in (0, 1]$,
\begin{equation}
H\!\bigl(\Phi_s^{d_1, d_2}(L, W, S, B),\,C^1([0,1]^{d_1}),\,\tau\bigr)
\;\le\; c_{L, s, d_1, d_2} \bigl(N \log(\tau^{-1}) + N \log N\bigr),
\label{eq:entropy_bd}
\end{equation}
where $H(\cdot, \cdot, \tau) := \log \mathcal N(\tau, \cdot, \cdot)$ denotes
the logarithm of the $\tau$-covering number.
\end{theorem}
\begin{proof}
Without loss of generality $d_2 = 1$; the general case follows by the
tensorization argument of~\cite[Cor.~4.3]{marzouk2023}.  Fix a
sparsity pattern of $\Phi_s^{d_1, 1}(L, W, S, B)$; the number of sparsity
patterns is bounded by $\binom{(W+1)^L}{S} \le (W+1)^{L S}$, see
\cite[proof of Thm.~4.2]{marzouk2023}.  For a fixed pattern, combine
Lemma~\ref{lem:param_lip} with a standard $\ell_\infty$-net of the
effective parameter space $[-B, B]^{\le S}$ of spacing
\[
\tau' \;:=\; \frac{\tau}{2 A_L^{(s)}\,M^{P_{L, s}}}.
\]
Following the arguments in \cite[Lemma 3]{Suzuki2018AdaptivityOD}
yields a $C^1$-cover of the sparsity-pattern restricted class of
cardinality at most
\[
\Bigl(\frac{2 B \cdot 2 A_L^{(s)}\,M^{P_{L, s}}}{\tau}\Bigr)^{\!S}.
\]
Taking logarithms, summing over the upper bound $(W+1)^{L S}$ on the number of sparsity patterns,
\[
H \;\le\; L S \log(W + 1) + S \log \tau^{-1}
+ S \bigl(P_{L, s} \log M + \log(4 B A_L^{(s)})\bigr).
\]
Inserting
$A_L^{(s)}, P_{L, s} = \mathcal{O}_{L, s}(1)$, $S, W, B = \mathcal{O}(N)$ and
$M = W \vee B \vee d_1 \vee 1 = \mathcal{O}(N)$ gives~\eqref{eq:entropy_bd}, with the
$s$-dependence sitting entirely in a prefactor $c_{L, s, d_1, d_2}$ and
not in the $N, \tau^{-1}$-scaling.
\end{proof}
\subsection{Main PAC Bound}
\label{sec:companion_pac}
With Theorems~\ref{thm:relu_s_approx} and~\ref{thm:relu_s_entropy}
available, \cite[Thm.~4.8]{marzouk2023} applies verbatim with
$\mathrm{ReLU}^s$ in place of $\mathrm{ReLU}^2$.
\begin{proof}[Proof of Proposition~\ref{thm:pac_learning_error_2}]
Fix $\mu$ satisfying Assumption~\ref{ass:general_regime}.  The proof of the
corresponding~\cite[Thm.~4.8]{marzouk2023} is split into
\emph{approximation error} and a \emph{metric entropy bound}; we show that only
minor modifications are needed to cover the case $s \ge 2$.
\smallskip
\textit{Approximation error.}
Analogously to the proof of~\cite[Thm.~4.8]{marzouk2023}, we apply
\cite[Thm.~3.1]{marzouk2023} to construct a $C^k$-velocity field
$v^\star \in V$ generating the Knothe--Rosenblatt flow from $\nu$ to
$\mu$, with $\|v^\star\|_{C^k(\Omega)}$ bounded by a constant depending only on
$(d, k, \kappa, \mathcal K, M)$; uniformity over $\mathcal T$ follows from the
uniform bound $\|f_\mu\|_{C^k} \le M$ in Assumption~\ref{ass:general_regime}~(A4).  Applying
Theorem~\ref{thm:relu_s_approx} with $r = 1$, $d_1 = d + 1$, $d_2 = d$ to
$v^\star$ produces, for every $N \in \N$, a network
$\hat v \in \Phi_s^{d+1, d}(L, W, S, B)$ with
$L = \mathcal{O}(1)$, $W, S = \mathcal{O}(N)$, $B = \mathcal{O}(\|v^\star\|_{C^k} + N^{1/(d+1)})$
and
\[
\|\hat v - v^\star\|_{C^1(\Omega)}
\;\le\; C_{d, k, s}\,N^{-(k-1)/(d+1)}.
\]
Applying Theorem~\ref{thm:relu_s_approx} with the auxiliary index $r = s-1$ (admissible since $s \leq k+1$ implies $s-1 \leq k$), the same network $\hat v$ also satisfies $\|\hat v - v^\star\|_{W^{s-1,\infty}(\Omega)} \le C N^{-(k-s+1)/(d+1)} \|v^\star\|_{C^k}$. Using the equivalence between $W^{s-1,\infty}$ and $C^{s-1}$ on the bounded Lipschitz domain $\Omega$ for integer $s-1$, this transfers to a $C^{s-1}$-bound. Combined with $\|v^\star\|_{C^{s-1}} \leq \|v^\star\|_{C^k}$ (which uses $s-1 \leq k$), we obtain $\|\hat v\|_{C^{s-1}(\Omega)} \leq r$ for $N$ sufficiently large with truncation radius $r = \mathcal{O}(\|v^\star\|_{C^k}) = \mathcal{O}(1)$. The additional $W^{2,\infty}$-bound required by the hypothesis class~\eqref{eq:hypothesis_main} for $s = 2$ is obtained analogously: Theorem~\ref{thm:relu_s_approx} with auxiliary index $r = 2$ (admissible since $k \ge 2$) gives $\|\hat v - v^\star\|_{W^{2,\infty}} \le CN^{-(k-2)/(d+1)}\|v^\star\|_{C^k}$, and together with $\|v^\star\|_{W^{2,\infty}} \le \|v^\star\|_{C^k} = \mathcal O(1)$ this yields $\|\hat v\|_{W^{2,\infty}(\Omega)} \le r$ for the same $r = \mathcal O(1)$; for $s \ge 3$ the $W^{2,\infty}$-bound is implied by the $C^{s-1}$-bound via $C^{s-1} \subset C^2 \subset W^{2,\infty}$ and no separate argument is needed. Hence $\hat v \in \mathcal{F}_s^{L, W, S, B, r}$ after multiplication by the cut-off $\chi_d$, and the approximation step controls the deterministic (model) error $\mathcal{H}^2(\mu, \Phi^{\hat v}_*\nu) \le C\,N^{-2(k-1)/(d+1)}$, by applying \cite[Lem.~2.6, Thm.~2.7, Thm.~2.8 and Lem.~A.3]{marzouk2023}.
\smallskip
\textit{Metric entropy bound.}
Theorem~\ref{thm:relu_s_entropy} replaces~\cite[Thm.~4.2]{marzouk2023}
inside the proof of~\cite[Thm.~4.8]{marzouk2023}; $s$-dependence
enters only through the prefactors
$C_L^{(s)}, \tilde C_L^{(s)}, A_L^{(s)}, P_{L, s}$ of
Lemmas~\ref{lem:uniform_bd}--\ref{lem:param_lip}, all finite for
$L = \mathcal{O}(1)$. Crucially, the rate exponent $\eta = 2(k-1)/(d+1+2(k-1))$ depends only on the density smoothness $k$ (entering via the approximation step above) and not on the activation order $s$ (which enters only via the prefactor constants), confirming that the statistical convergence rate is fully decoupled from the choice of $s$.
\smallskip
\textit{Balancing terms.}
Choosing the resolution parameter $N$ in
Theorem~\ref{thm:relu_s_approx} in order to balance the approximation error with the metric entropy term, as in the original statement,
\[
N \;\simeq\; n^{\frac{d+1}{d+1+2(k-1)}}.
\]
The general concentration theorem~\cite[Thm.~2.1]{marzouk2023}
combines the approximation bound with the
entropy bound to yield the concentration
inequality~\eqref{eq:pac_hellinger}, with
$\eta = 2(k-1)/(d+1+2(k-1))$ and constants $C_1, C_2 > 0$ depending only on
$(d, s, k, \kappa, \mathcal K, M)$.
\end{proof}
\section{Setup of the Numerical Illustrations}
\label{app:genz_reference}
This appendix specifies the target distributions, the integrands and the closed-form reference values used throughout Section~\ref{sec:numerics}.
We caution the reader that, to keep the notation light, this appendix overloads two symbols compared to the main body: $\Phi_{\mathcal N}$ denotes the standard normal CDF (instead of a transport map), and the bump location parameters of the Gaussian mixture are denoted $a_i$ to avoid clashing with the marginal targets $\mu_j$.
\subsection{Diagonal Regime}
\subsubsection{Target Densities}
\label{app:targets}
All marginal targets $\mu$ on $[0,1]$ are mixtures of a uniform floor and a finite number of Gaussian bumps clipped to $[0,1]$,
\begin{equation}
\label{eq:mixture_marginal_density}
f_\mu(x)
\;=\;
\frac{1}{Z_\mu}\!
\left(\,
c_0 \;+\; \sum_{i=1}^{K} w_i\,
\frac{1}{\sigma_i\sqrt{2\pi}}\,
\exp\!\Big(\!-\frac{(x-a_i)^2}{2\sigma_i^2}\Big)
\right),
\qquad
x\in[0,1],
\end{equation}
where $c_0 > 0$ is a uniform background, $\{w_i\}\subset[0,1]$ are mixture weights summing to one, and $\{(a_i,\sigma_i)\}$ are bump locations and widths. The normalizing constant
\begin{equation}
\label{eq:Z_mu_closed_form}
Z_\mu
\;=\;
c_0 \;+\; \sum_{i=1}^{K} w_i\,
\Big[\,\Phi_{\mathcal N}\!\big(\tfrac{1-a_i}{\sigma_i}\big)
       - \Phi_{\mathcal N}\!\big(\tfrac{-a_i}{\sigma_i}\big)\Big]
\end{equation}
is closed-form in the standard normal CDF $\Phi_{\mathcal N}$, as is the marginal CDF $F_\mu$. The uniform floor $c_0 > 0$ guarantees the lower bound $f_\mu \geq c_0/Z_\mu > 0$ required by Assumption~\ref{ass:diagonal_regime}~(A3) and hence by Theorem~\ref{thm:pac_lti_diagonal}.
For the one-dimensional comparison in Section~\ref{subsec:numerics_genz_1d} we use three representative parameter choices; cf. fig.~\ref{fig:target_densities}
\vspace{1em}
\begin{center}
\renewcommand{\arraystretch}{1.15}
\begin{tabular}{l c c c c}
\toprule
label & $c_0$ & $\{w_i\}$ & $\{a_i\}$ & $\{\sigma_i\}$ \\
\midrule
A: broad    & $0.50$ & $\{1\}$       & $\{0.50\}$       & $\{0.20\}$ \\
B: peaked   & $0.10$ & $\{1\}$       & $\{0.55\}$       & $\{0.08\}$ \\
C: bimodal  & $0.20$ & $\{0.5,0.5\}$ & $\{0.25,0.75\}$  & $\{0.08,0.08\}$ \\
\bottomrule
\end{tabular}
\end{center}
 \vspace{1em}
\begin{figure}
    \centering
    \includegraphics[width=0.9\linewidth]{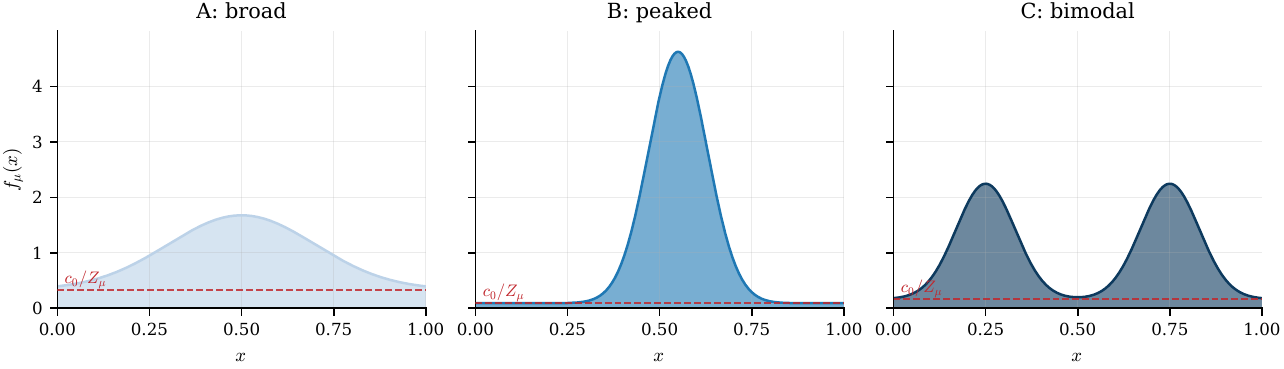}
    \caption{Target densites for the one-dimensional comparison in Section~\ref{subsec:numerics_genz_1d}}
    \label{fig:target_densities}
\end{figure}
For the multi-dimensional illustration in Section~\ref{subsec:numerics_genz_multid} we use the all-broad product target $\mu = \bigotimes_{j=1}^d \mu^{\textsc{a}}_j$ with each marginal $\mu^{\textsc{a}}_j$ given by row A above.
\subsubsection{Genz Test Integrands}
\label{app:integrands}
We adopt the integrands of \cite{Genz1984} adapted to $[0,1]$. The 1D versions used in Section~\ref{subsec:numerics_genz_1d} are
\begin{align}
\qoi_1(x) &= \cos\!\big(2\pi w + c\,x\big),
&& c=6,\;w=0.4,
&& \text{(oscillatory, $C^\infty$)},
\label{eq:f1_def}
\\
\qoi_4(x) &= \exp\!\big(-c^2(x-w)^2\big),
&& c=4,\;w=0.4,
&& \text{(Gaussian peak, $C^\infty$)},
\label{eq:f4_def}
\\
\qoi_6(x) &= \mathds{1}\{x>w\}\,e^{\,c(x-1)},
&& c=2,\;w=0.4,
&& \text{(jump, $L^1$)}.
\label{eq:f6_def}
\end{align}
The integrands $\qoi_1, \qoi_4$ are smooth and fall within the regularity hypothesis of Theorem~\ref{thm:pac_lti_diagonal}; $\qoi_6$ is included as a case outside the theoretical scope; but covered by \cite{bungartz2004sparse}.
For the multi-dimensional illustration we use the canonical separable extensions of $\qoi_1, \qoi_4$ to $[0,1]^d$,
\begin{equation}
\label{eq:fdef_dim}
\qoi_1(x) = \cos\!\Big(2\pi w + \sum_{j=1}^d c_j\,x_j\Big),
\qquad
\qoi_4(x) = \exp\!\Big(\!- \sum_{j=1}^d c_j^2\,(x_j - w)^2\Big),
\end{equation}
with $w=0.4$ and per-dimension parameter scaling
\begin{equation}
\label{eq:c_scaling}
\qoi_1: \quad c_j = \tfrac{6}{d},
\qquad
\qoi_4: \quad c_j = \tfrac{1}{\sqrt{d}}.
\end{equation}
The scaling keeps the supremum norm $\|\qoi\|_\infty = 1$ as $d$ varies. 
 
\subsubsection{Closed-Form Reference Values}
\label{app:closed_form_E_mu}
For each integrand, $\evdist{\mu}{\qoi}$ decomposes into a uniform and a Gaussian-bump contribution,
\begin{equation}
\evdist{\mu}{\qoi}
\;=\;
\frac{1}{Z_\mu}\!\left(
c_0\!\!\int_0^1\!\qoi(x)\,\d x
\;+\;
\sum_{i=1}^{K} w_i\!\!\int_0^1\!\qoi(x)\,
\frac{e^{-(x-a_i)^2/(2\sigma_i^2)}}{\sigma_i\sqrt{2\pi}}\,\d x
\right),
\end{equation}
in which both pieces admit closed-form expressions in the (real or complex) error function. The key identity for the bump contributions is the complete-the-square integral
\begin{equation}
\label{eq:gauss_explin_identity}
\begin{aligned}
\int_a^b e^{\beta x}\,
\frac{e^{-(x-a_0)^2/(2\sigma^2)}}{\sigma\sqrt{2\pi}}\,\d x
&\;=\;
\tfrac{1}{2}\,e^{\,a_0\beta + \sigma^2\beta^2/2}\,
\Big[\erf\!\big(\tfrac{b-z_\beta}{\sigma\sqrt{2}}\big)
     - \erf\!\big(\tfrac{a-z_\beta}{\sigma\sqrt{2}}\big)\Big], \\
& \qquad z_\beta := a_0 + \beta\sigma^2,
\end{aligned}
\end{equation}
valid for any $\beta\in\mathbb{C}$ (with the standard analytic continuation of $\erf$ for non-real $\beta$, and $\operatorname{Re}$ denoting the real part). Writing $N(a, \sigma^2)(x) := \frac{1}{\sigma\sqrt{2\pi}} \exp\!\big(-(x-a)^2/(2\sigma^2)\big)$ for the Gaussian bump density, the three integrand classes reduce to~\eqref{eq:gauss_explin_identity} as follows:
\begin{itemize}
\item For $\qoi_1$, take $\beta = ic$ and use
\[
\int_0^1 \cos(2\pi w + cx)\, N(a, \sigma^2)(x)\,\d x
= \operatorname{Re}\!\Big[\,e^{i (2\pi w + a c)} \int_0^1 e^{icx}\, N(a, \sigma^2)(x)\, \d x\,\Big].
\]
\item For $\qoi_4$, the product $e^{-c^2(x-w)^2}\,N(a, \sigma^2)(x)$ is itself a (rescaled) Gaussian, proportional to $N(\tilde a, \tilde\sigma^2)(x)$ with
\[
\tilde\sigma^{-2} = 2c^2 + \sigma^{-2}, \qquad
\tilde a = \tilde\sigma^2\big(2 c^2 w + a / \sigma^2\big).
\]
\item For $\qoi_6$, restrict the integral to $[w, 1]$ and take $\beta = c$:
\[
\int_0^1 \mathds{1}\{x>w\}\,e^{c(x-1)}\, N(a, \sigma^2)(x)\, \d x
= e^{-c}\!\int_w^1 e^{cx}\, N(a, \sigma^2)(x)\, \d x.
\]
\end{itemize}
 
In the multi-dimensional case the separable structure of $\qoi_4$ and the product structure of $\mu$ give $\evdist{\mu}{\qoi_4} = \prod_{j=1}^d \evdist{\mu_j}{g_j}$ with $g_j(x_j)=e^{-c_j^2(x_j-w)^2}$, so the full $d$-dimensional reference is the product of the 1D pieces. For the non-separable oscillatory integrand $\qoi_1$, factorization via the characteristic function gives
\begin{equation}
\evdist{\mu}{\qoi_1}
\;=\;
\operatorname{Re}\!\bigg(e^{2\pi i w}\prod_{j=1}^d \varphi_{\mu_j}(c_j)\bigg),
\qquad
\varphi_{\mu_j}(c) \;:=\; \evdist{\mu_j}{e^{i\,c\,X_j}},
\end{equation}
and each marginal characteristic function $\varphi_{\mu_j}$ is closed form via~\eqref{eq:gauss_explin_identity} with $\beta = ic$.
\subsubsection{Multivariate Sparse Grid Setup}
\label{app:sampling_and_grids}
Sparse grids in Section~\ref{subsec:numerics_genz_multid} use the Clenshaw--Curtis nodes with the closed nonlinear growth $m_l = 2^{l-1}+1$ for $l \ge 2$ and $m_1 = 1$, combined into the Smolyak rule of sparsity level $\ell$. The dimension-dependent sparsity ranges used are
\begin{equation}
\label{eq:sparsity_grids}
\ell \in
\begin{cases}
\{1,\dots,7\},   & d=2,\\
\{1,\dots,6\},   & d\in\{5,10\},\\
\{1,\dots,6\},   & d=15,
\end{cases}
\end{equation}
Empirical-quantile errors are reported as the median over $4$ independent runs of the empirical quantile transport; naive Monte Carlo errors are reported as the median over $50$ independent runs at each budget.
\subsection{General Regime}
\subsubsection{Training Setup for the Trained-Flow Experiment}
\label{app:training_setup_relus}
The target $\mu$ used in Section~\ref{subsec:numerics_trained} is a two-bump Gaussian mixture on the diagonal of $[0,1]^2$,
\[
f_\mu(x) \;=\; \frac{1}{Z_\mu}\Big(c_0 + \tfrac{1}{2} \mathcal{N}(\tfrac{3}{10}\mathbf{1}, \sigma^2 I_2)(x) + \tfrac{1}{2} \mathcal{N}(\tfrac{7}{10}\mathbf{1}, \sigma^2 I_2)(x)\Big),
\]
with floor $c_0 = 0.30$ and isotropic width $\sigma = 0.18$. The uniform floor enforces $f_\mu \geq c_0/Z_\mu \approx 0.25$, satisfying Assumption~\ref{ass:general_regime}~(A3). 
The vector field $v^\theta$ is a fully connected $\mathrm{ReLU}^s$ network of width $128$ and depth $3$, passed through a smooth saturation $g_S(z) = S\tanh(z/S)$ with $S = 20$ and the boundary cut-off $\chi_d(x) = (x_1(1-x_1), x_2(1-x_2))^\top$. The saturation uniformly bounds $\|v^\theta\|_\infty$ to prevent $\mathrm{float32}$ overflow from compounding $\mathrm{ReLU}^s$ pre-activations during training and, being $C^\infty$, preserves the $C^{s-1}$ regularity class of the resulting flow. We use the regularized NLL with per-sample soft cap
\[
\mathcal{L}(\theta;x) \;=\; \operatorname{soft\_cap}_{\kappa}\!\Big(\!-\tfrac{1}{2}\log\big(\det(J J^\top) + \varepsilon\big)\Big),
\]
$J = \nabla_x \Phi^{-\theta}(x)$, $\varepsilon = 10^{-3}$, $\kappa = 3$, $\operatorname{soft\_cap}_\kappa(y) = y$ for $y \leq \kappa$ and $\kappa + \log(1 + y - \kappa)$ otherwise. The regularization bounds the gradient of $\log|\det J|$ near singular Jacobians; the soft cap keeps the batch gradient informative when individual samples are nearly degenerate. Training runs $3000$ AdamW iterations (weight decay $10^{-4}$, gradient clip $1$) with cosine-decayed learning rate from $\eta_0 = 10^{-3}$ down to $\eta_0/10$, on fresh batches of $2048$ samples drawn from $\mu$ each iteration and reverse-time RK4 with ten steps. Among the periodic checkpoints we retain the one whose held-out eval NLL is lowest.
The retained flows are evaluated on Clenshaw--Curtis sparse grids of levels $\ell = 1, \ldots, 9$, corresponding to budgets $m \in \{5, 13, 29, 65, 145, 321, 705, 1537, 3329\}$. The LtI estimator on $\qoi_1, \qoi_4$ is compared against the median of $80$ naive Monte Carlo runs at matching $m$.

\subsubsection{Synthetic LtI Test on Random \texorpdfstring{$\mathrm{ReLU}^s$}{ReLUs} Neural-ODE Flows}
\label{app:synthetic_quadrature_details}
For each $s \in \{2, 3, 5, 8\}$ and seed index $i = 1, \ldots, 32$ we draw a one-hidden-layer $\mathrm{ReLU}^s$ vector field
\[
v^{\theta_i}(x, t) \;=\; \chi_d(x) \odot \sum_{k=1}^{K} a_k^{(i)} \, \max\!\big(0, w_k^{(i)\top} [x; t] - b_k^{(i)}\big)^s, \qquad x \in [0,1]^2,\ t \in [0, 1],
\]
with $K = 8$ features, $\chi_d(x) = (x_1(1-x_1), x_2(1-x_2))^\top$ the boundary cut-off, directions $w_k^{(i)} \in \mathbb{R}^3$ drawn uniformly from the unit sphere (sampled as $\tilde w / \|\tilde w\|$ with $\tilde w \sim \mathcal{N}(0, I_3)$), offsets $b_k^{(i)} = w_k^{(i)\top} z_k^{(i)}$ placed at random kink points $z_k^{(i)} \sim \mathrm{Uniform}([0,1]^3)$, and output weights $a_k^{(i)} \in \mathbb{R}^2$ standardized by $\mathcal{N}(0, K^{-1} I_2)$. This way we ensure that each kink hyperplane $\{(x, t) : w_k^{(i)\top} [x; t] = b_k^{(i)}\}$ passes through the cube interior, so that the worst-case $C^{s-1}$ regularity of the resulting feature is actually attained on the test domain.
The associated time-1 flow $\Phi^{\theta_i}$ on $[0,1]^2$ is computed by RK4 with $40$ steps. The reference integrals against the uniform measure
\[
I_{s,k}^{(i)} \;=\; \int_{[0,1]^2} \qoi_k\big(\Phi^{\theta_i}(z)\big)\, dz, \qquad k \in \{1, 4\},
\]
are computed by tensor product Gauss--Legendre with $n_{\mathrm{GL}} = 1000$ nodes per axis ($10^6$ total nodes) using the same RK4 flow, so that the temporal-discretisation error of the flow cancels exactly between the reference integrals and the Smolyak estimates. For each $(s, k)$ the corresponding panel of Figure~\ref{fig:synthetic_quadrature} is the median over the $32$ seeds.
\end{document}